\title{\bf{Surfaces of general type with maximal  \\ Picard number near the Noether line  }}
\author{
	NGUYEN BIN and VICENTE LORENZO\\
}
\date{}
\newcommand{\BinAddresses}{{
		\bigskip
		\footnotesize
        \text{Nguyen Bin,}\par\nopagebreak	
		\text{Department of Mathematics and Statistics,}\par\nopagebreak	
		\text{Quy Nhon University,}\par\nopagebreak	
            \text{170 An Duong Vuong Street, Quy Nhon,}\par\nopagebreak
		\text{Vietnam.}\par\nopagebreak		
		\textit{E-mail address}: \texttt{nguyenbin@qnu.edu.vn}
		
}}
\newcommand{\VicenteAddresses}{{
		\bigskip
		\footnotesize
        \text{Vicente Lorenzo,}\par\nopagebreak	
		\text{Telematic Engineering Department,}\par\nopagebreak	
		\text{Universidad Carlos III de Madrid,}\par\nopagebreak	
            \text{Avenida de la Universidad 30, Leganés (Madrid),}\par\nopagebreak	
		\text{Spain.}\par\nopagebreak		
		\textit{E-mail address}: \texttt{vlorenzogarcia@gmail.com}
		
}}
\newcommand\blfootnote[1]{%
	\begingroup
	\renewcommand\thefootnote{}\footnote{#1}%
	\addtocounter{footnote}{-1}%
	\endgroup
}
\DeclareMathOperator{\rank}{rank}
\def\@endtheorem{\endtrivlist\@endpefalse } 
\def\th@plainitalic{%
  \thm@notefont{}
  \itshape 
}
\theoremstyle{plainitalic}
\newtheorem{Theorem}{Theorem}
\newtheorem{Corollary}{Corollary}
\newtheorem{Proposition}{Proposition}
\newtheorem{Lemma}{Lemma}
\newtheorem{Remark}{Remark}
\newtheorem{Definition}{Definition}
\newtheorem*{Acknowledgments}{ACKNOWLEDGMENTS}
\newcommand{\MSC}{\textbf{Mathematics Subject Classification (2010):}}
\newcommand{\Key}{\textbf{Key words:}}
\begin{document}
\maketitle
\begin{abstract}
The first published non-trivial examples of algebraic surfaces of general type with maximal Picard number are due to Persson, who constructed surfaces with maximal Picard number on the Noether line $K^2=2\chi-6$ for every admissible pair $(K^2,\chi)$ such that $\chi \not\equiv 0 \text{ mod } 6$. In this note, given a non-negative integer $k$, algebraic surfaces of general type with maximal Picard number lying on the line $K^2=2\chi-6+k$ are constructed for every admissible pair $(K^2,\chi)$ such that  $\chi\geq 2k+10$. These constructions, obtained as bidouble covers of rational surfaces, not only allow to fill in Persson's gap on the Noether line, but they provide infinitely many new examples of algebraic surfaces of general type with maximal Picard number above the Noether line.
\end{abstract}

\blfootnote{\MSC{ 14J29}.}
\blfootnote{\Key{ Picard number, Surfaces of general type, Abelian covers.}}

\section{Introduction} \label{Introduction}

Let $X$ be a smooth projective surface over the complex numbers $\mathbb{C}$. The Neron-Severi group $NS(X)$ of $X$ is the  group of divisors of $X$ modulo numerical equivalence. The group $NS(X)$ is finitely generated and its rank $\rho(X)$ is called the Picard number of $X$. The Picard number of $X$ is bounded above by the Hodge number $h^{1,1}(X)=\text{dim } H^1(X,\Omega^1_X)$ and  $X$ is said to have maximal Picard number if $\rho(X)=h^{1,1}(X)$. Surfaces $X$ with geometric genus $p_g(X)=0$ always have maximal Picard number. Excepting this trivial case, examples of algebraic surfaces of general type with maximal Picard number are very scarce in the literature, being Beauville's paper \cite{MR3322784} a remarkable reference on the topic. 
It is convenient to give some context 
before stating our main results.\\

A pair of strictly positive integers $(K^2, \chi)$ is said to be an admissible pair if it satisfies 
Noether's inequality $K^2\geq 2\chi-6$ and the  Bogomolov–Miyaoka–Yau inequality $K^2\leq 9 \chi$. The relevance of this definition lies on the fact that the pair $(K^2_X, \chi(\mathcal{O}_X))$ consisting of the self-intersection of the canonical class $K^2_X$ and the holomorphic Euler characteristic $\chi(\mathcal{O}_X)$ of a minimal surface of general type $X$ is an admissible pair (cf. \cite[Chapter VII]{MR2030225}). Minimal surfaces of general type $X$ such that $K^2_X=2\chi(\mathcal{O}_X)-6$ (resp.  $K^2_X=2\chi(\mathcal{O}_X)-5$) are said to lie on the Noether line (resp. one above the Noether line)
and they are known as even (resp. odd) Horikawa surfaces because they were thoroughly studied by Horikawa \cite{MR0424831} (resp. \cite{MR0460340}). The first article including non-trivial examples of algebraic surfaces of general type with maximal Picard number is due to Persson \cite{MR661198}, who constructed, among other examples, infinitely many even Horikawa surfaces with maximal Picard number. More precisely, Persson \cite[Theorem 1]{MR661198} proved that given an admissible pair $(K^2,\chi)$ such that $K^2=2\chi-6$ and $\chi\not\equiv 0 \text{ mod } 6$, then all the connected components of $\mathfrak{M}_{K^2,\chi}$ contain canonical models with maximal Picard number, where $\mathfrak{M}_{K^2,\chi}$ is Gieseker's moduli space \cite{MR498596} of canonical models of surfaces of general type with self-intersection of the canonical class $K^2$ and holomorphic Euler characteristic $\chi$.  More surfaces of general type with maximal Picard number can be found in Beauville's paper \cite{MR3322784} and the references therein or \cite{2014arXiv1406.2143A}, \cite{MR3460339}, \cite{MR3683423}, \cite{2016arXiv161100470S}. It is worth noting that most of these examples have low geometric genus.\\

Persson obtained his examples of surfaces of general type with maximal Picard number using the same approach that he applied to tackle the geographical question \cite{MR631426}. That is to say, he considered double covers of rational surfaces whose branch locus has a very particular configuration. The fact that considering bidouble covers of rational surfaces instead of double covers of rational surfaces turned out to be very efficient to deal with the geographical question \cite{MR4626843} together with the fact that every Horikawa surface can be degenerated into   a bidouble cover of a rational surface \cite{MR4440715}, encouraged the authors to consider bidouble covers of rational surfaces in order to obtain new examples of surfaces of general type with maximal Picard number. The main result of this paper, whose proof relies on the technique just mentioned, is the following:

\begin{Theorem}\label{CorollaryFirstFam}
 	Given an integer $k\geq 0$ let $(K^2,\chi)$ be an admissible pair such that 
	$K^2=2\chi-6+k$. If $\chi\geq 2k+10$, then $\mathfrak{M}_{K^2,\chi}$ contains canonical models whose minimal resolution has maximal Picard number.
\end{Theorem}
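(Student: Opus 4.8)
The plan is to realize every admissible pair $(K^2,\chi)$ with $K^2=2\chi-6+k$ and $\chi\geq 2k+10$ as the invariants of the minimal resolution $X$ of a bidouble cover $\pi\colon V\to Y$ of a rational surface $Y$, and then to establish maximal Picard number by a Galois-descent argument on $H^2$. Here $G=(\mathbb{Z}/2\mathbb{Z})^2$ acts on $V$ with quotient $Y$, which I would take to be a Hirzebruch surface $\mathbb{F}_e$ or a blow-up of one. The cover is specified by effective branch divisors $D_1,D_2,D_3$ and line bundles $L_1,L_2,L_3$ subject to $2L_i\equiv D_j+D_k$ for $\{i,j,k\}=\{1,2,3\}$, and the whole point of the construction is to take each $D_i$ to be a union of fibers and sections of the ruling on $\mathbb{F}_e$. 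In this way every irreducible component of the branch locus, and hence (after arranging the $D_i$ to have at worst mild singularities) every exceptional curve introduced when resolving $V$, is a smooth rational curve; this is precisely the feature that will force the Picard number to be maximal.

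The first step is the numerical realization. Using the standard formulas for abelian covers, namely $K_V^2=4\,(K_Y+L_1+L_2+L_3)^2$ and $\chi(\mathcal{O}_V)=4\chi(\mathcal{O}_Y)+\tfrac12\sum_{i}L_i\cdot(L_i+K_Y)$, together with the numerical classes of fibers and sections on $\mathbb{F}_e$, I would express $K^2$ and $\chi$ as explicit functions of the multiplicities of fibers and sections appearing in the $D_i$ and of $e$. The task is then purely arithmetic: to solve these relations, over the stated range $\chi\geq 2k+10$, so that the resulting surface lies on the prescribed line $K^2=2\chi-6+k$ and is minimal of general type with the expected invariants. I anticipate this will require splitting into a few cases according to the parity or residue of $\chi$, so that the divisor multiplicities come out as non-negative integers.

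For the maximal Picard number, I would decompose $H^2(X,\mathbb{C})$ into $G$-eigenspaces. The invariant part is $\pi^*H^2(Y,\mathbb{C})$, which is entirely of type $(1,1)$ and algebraic because $Y$ is rational; hence the $G$-invariant part of the transcendental lattice vanishes. Each of the three non-trivial characters contributes the anti-invariant cohomology of the corresponding intermediate double cover $W_i\to Y$, and this is a sub-Hodge-structure whose $(2,0)$ part is entirely anti-invariant since $H^{2,0}(Y)=0$. It therefore suffices to show that for each $i$ the anti-invariant part of $H^{1,1}(W_i)$ is spanned by algebraic classes, equivalently that the transcendental lattice of $W_i$ is purely of type $(2,0)+(0,2)$. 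But $W_i\to Y$ is precisely a double cover of a rational surface branched along a union of rational curves, which is exactly the situation handled by Persson's analysis: the preimages of the branch components and the exceptional curves of the resolution furnish enough independent algebraic classes to fill each anti-invariant $H^{1,1}(W_i)$. Combining the three eigenspaces with the vanishing of the invariant transcendental part yields $\rho(X)=h^{1,1}(X)$.

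The main obstacle is the simultaneous fulfilment of the two requirements. The Galois-descent argument for maximal Picard number is robust as long as the branch curves are rational, but it rigidly constrains the combinatorics of the configuration; meanwhile, hitting the \emph{entire} range of admissible pairs on the line $K^2=2\chi-6+k$ forces the branch multiplicities and the index $e$ to vary, and one must check that for every such choice the singularities of $V$ are canonical (so that its minimal resolution is minimal of general type and its canonical model sits in $\mathfrak{M}_{K^2,\chi}$) and that no unexpected vanishing drops $p_g$ below $\chi-1$. Reconciling the rigid configuration needed for the descent argument with the flexibility needed to sweep out all pairs with $\chi\geq 2k+10$ is where the real work lies, and is presumably the source of the lower bound $\chi\geq 2k+10$.
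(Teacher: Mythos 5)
Your overall framework---bidouble covers of Hirzebruch surfaces whose branch divisors are built from fibers and sections, invariants computed from the standard abelian-cover formulas, and Picard number bounded below by explicitly exhibited algebraic classes---matches the paper's, and your character-eigenspace reduction to the three intermediate double covers $W_i\to Y$ is a legitimate reformulation. But the step on which everything hinges is asserted rather than proved: you claim that because each $W_i$ is a double cover of a rational surface branched along rational curves, ``Persson's analysis'' guarantees that the anti-invariant part of $H^{1,1}(W_i)$ is spanned by preimages of branch components and exceptional curves of the resolution. This is false for a general such cover: the double cover of $\mathbb{P}^2$ branched along six general lines is a $K3$ with fifteen nodes whose minimal resolution has Picard number $16<20=h^{1,1}$, even though every branch component is rational. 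Rationality of the branch curves buys nothing by itself; what is needed is that the configuration be so singular that the exceptional $(-2)$-curves, together with classes pulled back from the base, already span a sublattice of rank equal to $h^{1,1}(X)=10\chi-K^2-2q$. Producing such configurations for \emph{every} admissible pair with $\chi\geq 2k+10$ is the entire content of the theorem, and your proposal constructs none. The appeal to Persson is moreover circular in spirit: his double-cover constructions miss the case $\chi\equiv 0 \text{ mod } 6$ on the Noether line and say nothing above it, which is precisely the gap this theorem is designed to close.

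For comparison, the paper's argument is entirely explicit and does not need the eigenspace decomposition. It starts from a fixed configuration of eight lines in $\mathbb{P}^2$, blows up one distinguished point to reach $\mathbb{F}_1$, applies the $(2n+k)$-cyclic cover branched at two fibers (Lemma \ref{PerssonsTrick}) to amplify the nodes of the configuration into singularities of type $A_{4n+2k-1}$, and then adds suitable fibers so that these become $D$-type singularities of the branch locus. The resulting $\mathbb{Z}_2^2$-cover $X\to\mathbb{F}_{2n+k}$ has only ADE singularities (identified via Corollary \ref{Z22Sing} and Remark \ref{DoubleSing}), and Lemma \ref{PicardSurjectiveMorphism} bounds $\rho(\hat{X})$ below by the sum of the ranks of the corresponding ADE lattices plus $2$ (for $\Delta_0$ and $F$); a direct count shows this equals $h^{1,1}(X)=24n+8m+15k+14$. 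The passage from this three-parameter family to all pairs with $\chi\geq 2k+10$ is then a two-line arithmetic argument. To salvage your eigenspace route you would still have to carry out essentially this same construction and count, so the decomposition does not reduce the actual work.
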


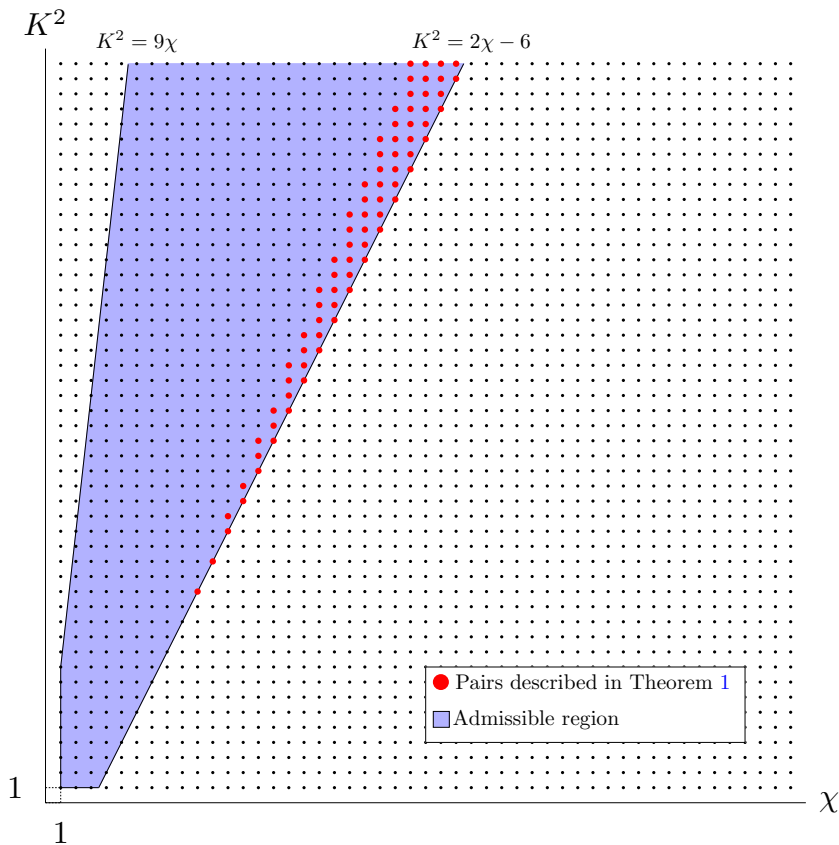
\begin{figure}[ht]
    \centering
    \scalebox{0.2}{
        \begin{tikzpicture}
    \draw[help lines, color=black, dashed, ultra thick] (0,0) grid (1,1) node [scale=6] at (1,-2) {$1$} node [scale=6] at (-2,1) {$1$};
                 \filldraw[blue!30, opacity=0.2] (5.44,49) -- (1,9) -- (1,1) -- (3.5,1) -- (27.5,49) -- cycle;
\foreach \x [count=\xi] in {1,...,49}
\foreach \y [count=\yi] in {1,...,49}
\fill   (\x,\yi) circle[radius=3pt];
            \draw[ultra thick] 
            (0,0)--(50,0) node[right, scale=6]{$\chi$};
            \draw[-,ultra thick] (0,0)--(0,50) node[above, scale=6]{$K^2$};
            \draw[domain=7/2:27.5, smooth, variable=\x] plot ({\x}, {2*\x-6}) node [scale=4] at (28, 50.5) {$K^2=2\chi-6$};
            \draw[domain=1:5.44, smooth, variable=\x] plot ({\x}, {9*\x}) node [scale=4] at (6, 50.5) {$K^2=9\chi$};
            \draw[domain=1:7/2, smooth, variable=\x] plot ({\x}, {1});
            \draw[domain=1:9, smooth, variable=\y] plot ({1}, {\y});

            \foreach \x [count=\xi] in {1,...,49}
            \foreach \y [count=\yi] in {1,...,49}
            {
                \foreach \k in {0,...,10} 
                {
                    \pgfmathtruncatemacro\condition{(\x >= 2*\k + 10) && (\y == 2*\x - 6 + \k)}
                    \ifnum\condition=1
                        \fill[red] (\x,\yi) circle[radius=6pt];
                    \fi
                }
            }
        \filldraw [white, opacity=1] (25,4) -- (25,9) -- (46,9) -- (46,4) -- cycle;
        \draw [thick,black] (25,9) rectangle (46, 4);
        
        \fill[red] (26,8) circle[radius=15pt];
        \node[scale=4] at (36,8) {Pairs described in Theorem \ref{CorollaryFirstFam}};
        
        \filldraw[blue!30, opacity=0.2, draw=black] (25.5,5) rectangle ++(30pt,30pt);
        \node[scale=4] at  (32.25,5.5) {Admissible region};
   
        \end{tikzpicture}
    }
    \caption{
        Plane $(K^2, \chi)$ where the  pairs described in Theorem \ref{CorollaryFirstFam} and the admissible region  (i.e. the region  delimited by the inequalities  $K^2\geq 1, \chi\geq 1, K^2\leq 9\chi, K^2\geq 2\chi-6$)   have been highlighted.
   }
    \label{fig:K2chiplane}
\end{figure}

As can be seen from Figure \ref{fig:K2chiplane} and will be elucidated in Remark \ref{ToArea} below,  the pairs described in Theorem \ref{CorollaryFirstFam} are precisely those admissible pairs $(K^2,\chi)$ such that $K^2\leq \frac{5}{2}\chi-11$. In particular, Remark \ref{density} will show that the set of quotients $\frac{K^2_X}{\chi(\mathcal{O}_X)}$ for some canonical model $X$ whose minimal resolution has maximal Picard number is dense in the interval $[2,\frac{5}{2}]$.

\medskip

Combining Persson and Horikawa's work with Theorem \ref{CorollaryFirstFam} we can get the following result regarding even Horikawa surfaces:

\begin{Theorem}\label{EvenHorikawaMaxPicard}
	Let $(K^2, \chi)$ be an admissible pair such that $K^2=2\chi-6$. Then all the connected components of $\mathfrak{M}_{K^2,\chi}$ contain canonical models whose minimal resolution has maximal Picard number.
\end{Theorem}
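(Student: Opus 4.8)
The plan is to combine Persson's original result with the new Theorem \ref{CorollaryFirstFam} by splitting the even Horikawa case $K^2 = 2\chi - 6$ according to the residue of $\chi$ modulo $6$. Recall that Persson \cite[Theorem 1]{MR661198} already establishes the conclusion — every connected component of $\mathfrak{M}_{K^2,\chi}$ contains canonical models with maximal Picard number — for all admissible pairs on the Noether line with $\chi \not\equiv 0 \bmod 6$. Thus the only gap left to fill is the case $\chi \equiv 0 \bmod 6$, and this is precisely where Theorem \ref{CorollaryFirstFam} is meant to intervene with the choice $k=0$.

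First I would set $k=0$ in Theorem \ref{CorollaryFirstFam}, which covers admissible pairs with $K^2 = 2\chi - 6$ and $\chi \geq 10$, producing canonical models whose minimal resolution has maximal Picard number. This handles the large-$\chi$ part of Persson's gap. The remaining pairs to address are those with $\chi \equiv 0 \bmod 6$ and $\chi < 10$, namely $\chi = 6$, together with the need to reconcile the two constructions so that \emph{every} connected component of the moduli space is hit, not merely some component. Here I would invoke Horikawa's classification \cite{MR0424831} of even Horikawa surfaces, which describes the connected components of $\mathfrak{M}_{2\chi-6,\chi}$ explicitly; the key point is that the maximal-Picard-number examples furnished by Persson and by Theorem \ref{CorollaryFirstFam} must be checked to populate each of these components.

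The main obstacle I anticipate is precisely this component-by-component bookkeeping. Persson's statement already asserts that \emph{all} connected components are hit for $\chi \not\equiv 0 \bmod 6$, so no work is needed there. For $\chi \equiv 0 \bmod 6$ with $\chi \geq 12$, one must verify that the bidouble-cover construction underlying Theorem \ref{CorollaryFirstFam} realizes surfaces in each of the (at most two, by Horikawa's analysis) connected components of the even Horikawa moduli space. This requires matching the deformation type of the bidouble covers against Horikawa's invariant distinguishing the components — typically whether the canonical image is a surface or a ruled/rational variety, equivalently a numerical condition on the divisibility of $\chi$. The small remaining case $\chi = 6$ (so $K^2 = 6$) would be treated separately, either by exhibiting an explicit maximal-Picard-number surface or by appealing to the structure of that specific moduli space.

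Once the components are accounted for, the conclusion follows by simply asserting the union: for each admissible pair on the Noether line, every connected component of $\mathfrak{M}_{K^2,\chi}$ contains a canonical model whose minimal resolution has maximal Picard number, drawing the examples from Persson when $\chi \not\equiv 0 \bmod 6$ and from Theorem \ref{CorollaryFirstFam} (refined by the component analysis) when $\chi \equiv 0 \bmod 6$. The essential new content over Persson is thus confined to the divisible-by-six case, and the proof is essentially an assembly argument resting on Theorem \ref{CorollaryFirstFam} and Horikawa's description of the moduli space.
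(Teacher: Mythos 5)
Your decomposition is the same as the paper's: Persson for $\chi\not\equiv 0 \bmod 6$, Theorem \ref{CorollaryFirstFam} with $k=0$ for the divisible-by-six case with $\chi\geq 12$, and a separate treatment of $\chi=6$. However, there is a genuine gap at the step you yourself flag as the ``main obstacle''. You propose to handle the case $\chi\equiv 0\bmod 6$ by matching the bidouble covers of Theorem \ref{CorollaryFirstFam} against each of the ``at most two'' connected components of $\mathfrak{M}_{2\chi-6,\chi}$, but you never carry out this matching, and it would in fact be a nontrivial task (one would have to identify the deformation type of the covers in Horikawa's classification). The point that closes the gap — and the one the paper uses — is that by Horikawa's Theorem 3.3 in \cite{MR0424831} the moduli space $\mathfrak{M}_{2\chi-6,\chi}$ has a \emph{unique} connected component precisely when $\chi\equiv 0\bmod 6$; hence exhibiting a single canonical model with the required property suffices, and no component bookkeeping is needed. (For $\chi\not\equiv 0\bmod 6$, where the moduli space may be disconnected, Persson's theorem already asserts the conclusion for all components, so nothing is needed there either.)

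A second, smaller gap is the case $\chi=6$, which you leave as ``to be treated separately, either by exhibiting an explicit surface or by appealing to the structure of that moduli space'' without doing either. The paper settles it by pointing to Persson's own example in \cite[Section 2, 3)]{MR661198}, or alternatively to Remark \ref{ExtraCases1Fam}, where the construction of Theorem \ref{FirstFam} with $(n,m,k)=(1,2,0)$ produces a canonical model in $\mathfrak{M}_{6,6}$ whose minimal resolution has maximal Picard number. With these two points supplied, your assembly argument goes through exactly as in the paper.
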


Combining Theorem \ref{CorollaryFirstFam} with some scattered examples we can get the following result regarding odd Horikawa surfaces:

\begin{Theorem}\label{OddHorikawaMaxPicard}
	Let $(K^2, \chi)$ be an admissible pair such that $K^2=2\chi-5$ and $\chi\neq 8$. Then $\mathfrak{M}_{K^2,\chi}$ contains canonical models whose minimal resolution has maximal Picard number.
\end{Theorem}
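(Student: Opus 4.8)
The plan is to split the admissible pairs on the odd Horikawa line $K^2=2\chi-5$ into a large-$\chi$ range, handled uniformly by the main theorem, and a small-$\chi$ range, handled by a finite list of explicit examples. The first observation is that the line $K^2=2\chi-5$ is exactly the case $k=1$ of Theorem \ref{CorollaryFirstFam}: writing $K^2=2\chi-6+k$ forces $k=1$, and then the hypothesis $\chi\geq 2k+10$ reads $\chi\geq 12$. Thus Theorem \ref{CorollaryFirstFam} already supplies, for every admissible pair with $K^2=2\chi-5$ and $\chi\geq 12$, a canonical model in $\mathfrak{M}_{K^2,\chi}$ whose minimal resolution has maximal Picard number, disposing of all but finitely many cases in one stroke.

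Next I would pin down precisely which small values of $\chi$ remain. Since admissibility forces $K^2=2\chi-5\geq 1$, we have $\chi\geq 3$; intersecting this with the exclusion $\chi\neq 8$ and with the range $\chi\geq 12$ already covered, the outstanding cases are exactly $\chi\in\{3,4,5,6,7,9,10,11\}$, a finite list of eight admissible pairs. Because the conclusion to be proved only asserts that $\mathfrak{M}_{K^2,\chi}$ \emph{contains} such a canonical model (rather than that every connected component does, as in the even case of Theorem \ref{EvenHorikawaMaxPicard}), for each of these eight pairs it suffices to exhibit a single minimal surface of general type with the prescribed invariants $K^2=2\chi-5$, $\chi(\mathcal{O})=\chi$ whose Picard number is maximal.

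The genuine work therefore lies in these scattered examples, to which the uniform bidouble-cover bound of Theorem \ref{CorollaryFirstFam} no longer applies once $\chi<2k+10$. For each small $\chi$ I would search for a surface whose transcendental lattice has rank exactly $2p_g$ — equivalently $\rho=h^{1,1}$ — either among the rigid or highly symmetric Horikawa surfaces and related examples already recorded in the literature (such as those in \cite{MR3322784} and its references), or constructed ad hoc as double or bidouble covers of rational surfaces branched along configurations of rational curves meeting in enough points to push the Néron–Severi rank up to $h^{1,1}$. Verifying maximality in each case reduces to displaying sufficiently many independent algebraic classes, a direct but case-by-case computation; the point to keep in mind is simply that $\chi=8$ is omitted from the statement precisely because no such example is produced by these methods there, so the theorem is asserted only for $\chi\neq 8$.
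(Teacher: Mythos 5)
Your reduction is exactly the paper's: Theorem \ref{CorollaryFirstFam} with $k=1$ covers every admissible pair on $K^2=2\chi-5$ with $\chi\geq 12$, and since $K^2=2\chi-5\geq 1$ forces $\chi\geq 3$, the remaining cases are precisely $\chi\in\{3,4,5,6,7,9,10,11\}$. Up to that point the argument is correct. But the proof stops where the actual content of the theorem begins: for those eight pairs you only describe a search strategy (``I would search for a surface \dots either among the rigid or highly symmetric Horikawa surfaces \dots or constructed ad hoc''), without exhibiting or citing a single concrete example. Since the whole theorem beyond the trivial reduction consists of producing these eight surfaces and verifying $\rho=h^{1,1}$ for each, this is a genuine gap rather than an omitted routine verification.

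It is also worth noting where the paper actually finds these examples, because one of your assertions points in the wrong direction. You write that the bidouble-cover method ``no longer applies once $\chi<2k+10$''; that inequality is only the hypothesis of the \emph{corollary}, not a limitation of the underlying constructions. In fact five of the eight missing pairs ($\chi=4,7,9,10$ via Remark \ref{ExtraCases1Fam}, and $\chi=11$ via Remark \ref{ExtraCases2Fam}) are obtained from the very same families of Theorems \ref{FirstFam} and \ref{SecondFam} by choosing parameters $(n,m,k)$ outside the range used to prove Theorem \ref{CorollaryFirstFam}. The three genuinely separate cases are $\chi=5$, settled by citing Sch\"utt's quintics with maximal Picard number \cite{MR2855811} (Lemma \ref{MP55}); $\chi=3$, settled by an explicit $\mathbb{Z}_2^2$-cover of $\mathbb{P}^2$ branched along a cuspidal cubic and its tangent lines (Lemma \ref{MP13}); and $\chi=6$, which requires the most delicate construction in the paper (Lemma \ref{MP76}), involving Persson's tri-conical configuration, a non-ADE singularity of type $El(1)$, and a separate minimality/nefness check. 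None of this is recoverable from the outline you give, so the eight small cases must be supplied before the proof can be considered complete.
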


As it is now clear from the discussion above, Theorem \ref{CorollaryFirstFam} not only allows to fill in the missing case $\chi\equiv 0 \text{ mod } 6$ of \cite[Theorem 1]{MR661198}, but also provides infinitely many new examples of surfaces of general type with maximal Picard number lying one or more above the Noether line.\\

The paper is structured as follows. In Section \ref{BidoubleCovers}
it is explained how to construct bidouble covers and how to obtain information about them. Section \ref{BasicsPicardNumber} contains the basics on the Picard number that will be needed throughout the paper. In Section \ref{Proof of the main results} two families of surfaces with maximal Picard number are constructed. Theorem \ref{CorollaryFirstFam} will be an immediate consequence of these constructions. Finally, Section \ref{HorikawaSurfaces} is devoted to explain how Theorem \ref{EvenHorikawaMaxPicard} and Theorem \ref{OddHorikawaMaxPicard} can be derived  from the constructions of the previous section.\\

\textbf{Notation and conventions.} All varieties are assumed to be algebraic and  projective varieties over the complex numbers $\mathbb{C}$. Linear
equivalence of line bundles or divisors will be denoted by $\cong$, whereas $\equiv$ will  denote congruence modulo a given integer. The Hirzebruch surface $\mathbb{P}(\mathcal{O}_{\mathbb{P}^1}\oplus \mathcal{O}_{\mathbb{P}^1}(-e))$ will be denoted by $\mathbb{F}_e$ and, by abusing the notation, the negative section $\Delta_0$ and a general fiber $F$ of $\mathbb{F}_e$ will just mean two intersecting fibers if $e=0$. The rest of the notation is standard in
algebraic geometry.

	\section{Bidouble covers}
\label{BidoubleCovers}


A $\mathbb{Z}_2^2$-cover of a variety $Y$ is a finite map $f\colon X\to Y$ together with a faithful action of 
$\mathbb{Z}_2^2$ on $X$ such that $f$ expresses $Y$ as the quotient $X/\mathbb{Z}_2^2$.
Catanese \cite{MR755236} proved the structure theorem for smooth $\mathbb{Z}_2^2$-covers.  
By {\cite[Section 2]{MR755236}}
or {\cite[Theorem 2.1]{MR1103912}},
to define a normal $\mathbb{Z}_2^2$-cover $X\to Y$ of a
smooth and irreducible projective variety $Y$, it suffices to consider both:
\begin{itemize}
	\item Effective divisors $B_1, B_2, B_3$ on $Y$ such that the branch locus $B=B_1+B_2+B_3$ is reduced.
	\item Non-trivial line bundles $L_1, L_2, L_3$ on $Y$ satisfying $2L_1\cong B_2+B_3, 2L_2\cong B_1+B_3$ and such that  $L_3\cong L_1+L_2-B_3$.
\end{itemize}
The set $\{L_i,B_j\}_{i,j}$ is known as the building data of the cover.

\begin{Proposition}[{{\cite[Section 2]{MR755236}}} or {{\cite[Proposition 4.2]{MR1103912}}}]\label{Z22Formulas}
	Let $Y$ be a smooth surface and $f\colon X\to Y$ a smooth $\mathbb{Z}_2^2$-cover with building data 
	$\{L_i,B_j\}_{i,j}$. Then:
	\begin{equation*}
		\begin{split}
			2K_X & \cong f^*(2K_Y+B_1+B_2+B_3),\\
			K_X^2 & =(2K_Y+B_1+B_2+B_3)^2,\\
			p_g(X) & =p_g(Y)+\sum_{i=1}^3h^0(K_Y+L_i),\\
			\chi(\mathcal{O}_X) & =4\chi(\mathcal{O}_Y)+\frac{1}{2}\sum_{i=1}^3L_i(L_i+K_Y).
		\end{split}
	\end{equation*}
\end{Proposition}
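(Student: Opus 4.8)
The plan is to deduce all four formulas from the eigensheaf decomposition of $f_*\mathcal{O}_X$ provided by Catanese's structure theorem, together with relative duality for the finite flat morphism $f$, the projection formula, and Riemann--Roch on the smooth surface $Y$. Concretely, after labelling the three non-trivial characters of $\mathbb{Z}_2^2$ so that their product relations match the indexing of the building data, the structure theorem yields the splitting $f_*\mathcal{O}_X\cong\mathcal{O}_Y\oplus L_1^{-1}\oplus L_2^{-1}\oplus L_3^{-1}$ into $\mathbb{Z}_2^2$-eigensheaves, and it identifies each $B_i$ as the reduced branch divisor over which the inertia is the $\mathbb{Z}_2$ complementary to the $i$-th character. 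Everything below is bookkeeping once this splitting and the ramification behaviour are in hand.

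First I would establish the canonical and self-intersection formulas. Over each component $B_i$ of the branch locus the cover is simply ramified, so the reduced preimage $R_i:=(f^*B_i)_{\mathrm{red}}$ satisfies $f^*B_i\cong 2R_i$; hence, with $R:=R_1+R_2+R_3$, the Hurwitz formula $K_X\cong f^*K_Y+R$ gives $2K_X\cong 2f^*K_Y+f^*(B_1+B_2+B_3)\cong f^*(2K_Y+B_1+B_2+B_3)$, which is the first identity. Squaring and using $\deg f=4$ together with $(f^*D)^2=\deg f\cdot D^2$ yields $4K_X^2=4(2K_Y+B_1+B_2+B_3)^2$, i.e. $K_X^2=(2K_Y+B_1+B_2+B_3)^2$.

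For the holomorphic invariants I would push forward along the finite (hence exact) morphism $f$. Since $\chi$ is additive on the summands of $f_*\mathcal{O}_X$, I obtain $\chi(\mathcal{O}_X)=\chi(\mathcal{O}_Y)+\sum_{i=1}^3\chi(L_i^{-1})$, and Riemann--Roch on $Y$ gives $\chi(L_i^{-1})=\chi(\mathcal{O}_Y)+\tfrac12(-L_i)(-L_i-K_Y)=\chi(\mathcal{O}_Y)+\tfrac12 L_i(L_i+K_Y)$; summing produces the stated $\chi(\mathcal{O}_X)=4\chi(\mathcal{O}_Y)+\tfrac12\sum_{i=1}^3 L_i(L_i+K_Y)$. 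For $p_g$ I would invoke relative duality $f_*\omega_X\cong\mathcal{H}om(f_*\mathcal{O}_X,\omega_Y)$; dualizing the decomposition summand by summand gives $f_*\omega_X\cong\bigoplus_{i=0}^3\mathcal{O}_Y(K_Y+L_i)$ with $L_0:=0$, and taking global sections (again using that $f$ is finite) yields $p_g(X)=h^0(K_X)=h^0(K_Y)+\sum_{i=1}^3 h^0(K_Y+L_i)=p_g(Y)+\sum_{i=1}^3 h^0(K_Y+L_i)$.

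The only genuine input, and hence the main obstacle if one wished to be self-contained, is the structure theorem itself: verifying that $f_*\mathcal{O}_X$ splits as claimed with precisely these $L_i^{-1}$, that the algebra relations force $2L_i\cong B_j+B_k$ for $\{i,j,k\}=\{1,2,3\}$, and that the branch components are reduced and simply ramified so that $f^*B_i\cong 2R_i$. Granting Catanese's result, which is cited above, the remaining steps are formal applications of duality and Riemann--Roch; the only points requiring care are the sign conventions in Riemann--Roch for the negative summands $L_i^{-1}$ and the matching of character products with the index pattern among the $L_i$ and $B_j$.
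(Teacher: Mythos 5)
Your proposal is correct: the paper gives no proof of Proposition~\ref{Z22Formulas} of its own, deferring entirely to Catanese \cite{MR755236} and Pardini \cite{MR1103912}, and your argument---the eigensheaf splitting $f_*\mathcal{O}_X\cong\mathcal{O}_Y\oplus L_1^{-1}\oplus L_2^{-1}\oplus L_3^{-1}$, the Hurwitz formula combined with $f^*B_i\cong 2R_i$ for the canonical class, additivity of $\chi$ under finite pushforward together with Riemann--Roch, and relative duality for $p_g$---is precisely the standard derivation given in those references. Nothing further is needed.
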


We notice that Proposition \ref{Z22Formulas} holds in the case $X$ has $ADE$ singularities (see \cite{MR2030225} for  information about ADE singularities on curves and surfaces). For a more general statement, we direct the reader to the work of Bauer and Pignatelli \cite[Section 2]{MR4278662}.


    \begin{Remark}\label{LocalCoord}
	Let $f\colon X\to Y$ be a $\mathbb{Z}_2^2$-cover of a smooth surface $Y$ with  building data $\{L_i,B_j\}_{i,j}$ and let $Q=f(P)$ be an intersection point of $B_1$ and $B_2$ that is not contained in $B_3$. Let us denote by $b_j$ a local equation for $B_j$ around $Q=f(P)\in Y$ and let $w_i$ be a local coordinate on $L_i$ 
	for every $i,j\in\{1,2,3\}$. According to \cite[Proposition 2.3]{MR755236}, the following equations define $X$ locally around $P$:
	$$  w_1^2=b_2b_3, \qquad w_2^2=b_1b_3, \qquad w_3^2=b_1b_2, \qquad w_1w_2=w_3b_3, \qquad w_2w_3=b_1w_1, \qquad w_3w_1=b_2w_2.$$  
 Moreover, because $Q$ is not contained in $B_3$, we can assume $b_3 =1$.
\end{Remark}

We are going to use Remark \ref{LocalCoord} and its notation to prove the following:

\begin{Corollary}\label{Z22Sing}
	Let $f\colon X\to Y$ be a $\mathbb{Z}_2^2$-cover of a smooth surface $Y$ with  building data $\{L_i,B_j\}_{i,j}$ and let $Q=f(P)$ be an intersection point of $B_1$ and $B_2$ that is not contained in $B_3$.
	\begin{itemize}
		\item[i)] If both $B_1$ and $B_2$ are smooth at $Q$ but they intersect in such a way that $B_1+B_2$ has a singularity of type $A_{2n+1}$ at $Q$, then  $X$ has a singularity of type $A_n$ at $P$.
		\item[ii)] If $B_1$ has a singularity of type $A_1$ at $Q$, $B_2$ is smooth at $Q$ and they intersect in such a way that $B_1+B_2$ has a singularity of type $D_{2n+4}$ at $Q$, then  $X$ has a singularity of type $D_{n+3}$ at $P$.
		\item[iii)] If $B_1$ has a singularity of type $A_{n}$ at $Q$, $B_2$ is smooth at $Q$ and they intersect in such a way that $B_1+B_2$ has a singularity of type $D_{n+3}$ at $Q$, then  $X$ has a singularity of type $A_{2n+1}$ at $P$.
	\end{itemize}
\end{Corollary}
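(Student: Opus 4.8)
The plan is to reduce the statement to a purely local analytic computation around $P$ and $Q$, exploiting the explicit local equations of Remark \ref{LocalCoord}. Since $Q=f(P)$ is not contained in $B_3$, we may set $b_3=1$, and then the last five equations collapse: the relation $w_1w_2=w_3b_3=w_3$ lets us eliminate $w_3=w_1w_2$, after which $w_3^2=b_1b_2$, $w_2w_3=b_1w_1$ and $w_3w_1=b_2w_2$ all become formal consequences of $w_1^2=b_2$ and $w_2^2=b_1$. Hence, locally around $P$, the cover is the complete intersection surface
\[
X\cong\{\,w_1^2=b_2,\ w_2^2=b_1\,\}\subset\mathbb{C}^4,
\]
with coordinates $x,y,w_1,w_2$; that is, the fibre product over $Y$ of the two double covers branched along $B_2$ and $B_1$. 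This reduction is common to all three cases, and from here the strategy is always the same: fix convenient analytic normal forms for $b_1$ and $b_2$ realizing the prescribed singularity types, eliminate one of the base coordinates $x,y$ using one of the two defining equations to present $X$ as a hypersurface in $\mathbb{C}^3$, and read off its $ADE$ type.

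Carrying this out, for case i) I would take $b_1=x$ and $b_2=x-y^{n+1}$, two smooth branches meeting with intersection multiplicity $n+1$, so that $b_1b_2$ is of type $A_{2n+1}$; substituting $x=w_2^2$ gives $w_2^2-w_1^2=y^{n+1}$, which is an $A_n$ singularity. For case ii) I would take $b_1=y(x-y^{n+1})$, a node, and $b_2=x+y^{n+1}$, smooth, so that $b_1b_2=y(x^2-y^{2n+2})$ is of type $D_{2n+4}$; substituting $x=w_1^2-y^{n+1}$ gives $w_2^2-w_1^2y+2y^{n+2}=0$, a $D_{n+3}$ singularity. For case iii) I would take $b_1=x^2+y^{n+1}$, of type $A_n$, and $b_2=y$, smooth, so that $b_1b_2=y(x^2+y^{n+1})$ is of type $D_{n+3}$; substituting $y=w_1^2$ gives $w_2^2-x^2=w_1^{2n+2}$, an $A_{2n+1}$ singularity. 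Each resulting hypersurface is identified with the standard normal forms $A_m:uv=z^{m+1}$ and $D_m:u^2+v^2w+w^{m-1}=0$ after a linear change of the $w_i$ and a rescaling of the surviving base coordinate, the leftover sign and the factor $2$ being harmless. Along the way I would confirm, for instance by a Milnor-number count, that each chosen product $b_1b_2$ genuinely has the asserted curve singularity and that $b_1,b_2$ separately have the asserted types with the correct branch decomposition.

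The hard part will not be the elimination or the final identification, which are routine, but justifying that these explicit normal forms exhaust the general situation. What must be shown is that the hypotheses, namely the analytic types of $B_1$, of $B_2$, and of the union $B_1+B_2$ together with the prescribed way they meet, determine the pair $(b_1,b_2)$ up to an analytic change of coordinates on $Y$ fixing $Q$, and hence determine the analytic type of $X$ at $P$. This is where the theory of simple singularities is essential: because the $A_m$ and $D_m$ singularities are finitely determined and carry no moduli, any curve of the prescribed type with the given marked branches can be brought analytically to the chosen normal form, so the computations above are exhaustive rather than merely illustrative. Once this point is secured, the three local models yield the three claimed singularity types of $X$.
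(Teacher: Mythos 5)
Your proposal is correct and follows essentially the same route as the paper: both reduce to the local equations of Remark~\ref{LocalCoord} with $b_3=1$, eliminate $w_3=w_1w_2$ so that $X$ is locally $\{w_1^2=b_2,\ w_2^2=b_1\}$, substitute the same (or trivially equivalent) analytic normal forms for $b_1,b_2$, and read off the resulting $ADE$ hypersurface equation. The only substantive difference is that you explicitly flag the need to justify that these normal forms are exhaustive --- a point the paper passes over with ``we can assume'' --- and your appeal to the finite determinacy and absence of moduli of simple curve singularities is the right way to close that gap.
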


\begin{proof}
	In case i), since we are working over the complex numbers and $Q$ is not contained in $B_3$, we can assume that:
	$$b_1= x - y^{n+1},\qquad b_2= x + y^{n+1},\qquad b_3= 1.$$
	According to Remark \ref{LocalCoord} the following equations define $X$ locally around $P$:
	\begin{equation*}
	    \begin{split}
	        w_1^2 =x + y^{n+1}, \qquad w_2^2=x - y^{n+1}, \qquad w_3^2=x^2-y^{2n+2},\\  w_1w_2=w_3, \qquad w_2w_3=(x - y^{n+1})w_1, \qquad w_3w_1=(x + y^{n+1})w_2.
	    \end{split}
	\end{equation*} 
	Simplifying and eliminating redundant equations, we conclude that $X$ is defined locally around $P$ by:
         $$w_1^2=w_2^2+ 2y^{n+1}, $$
	which is the equation of a singularity of type $A_n$.\\

	\medskip
	
	In case ii) we can assume that:
	$$b_1= y(x - y^{n+1}),\qquad b_2= x + y^{n+1},\qquad b_3= 1.$$
	According to Remark \ref{LocalCoord} the following equations define $X$ locally around $P$:
	\begin{equation*}
	    \begin{split}
	        w_1^2=x + y^{n+1}, \qquad w_2^2=y(x - y^{n+1}), \qquad w_3^2=y(x^2-y^{2n+2}),\\
         w_1w_2=w_3, \qquad w_2w_3=y(x - y^{n+1})w_1, \qquad w_3w_1=(x + y^{n+1})w_2.
	    \end{split}
	\end{equation*} 
	Simplifying and eliminating redundant equations, we conclude that $X$ is defined locally around $P$ by:
 $$w_2^2=y(w_1^2- 2y^{n+1}),$$
	which is the equation of a singularity of type $D_{n+3}$.
	
	\medskip
	
	In case iii) we can assume that:
	$$b_1= x^2+y^{n+1},\qquad b_2= y,\qquad b_3= 1.$$
	According to Remark \ref{LocalCoord} the following equations define $X$ locally around $P$:
    \begin{equation*}
	    \begin{split}
	w_1^2= y, \qquad w_2^2=x^2+y^{n+1}, \qquad w_3^2=y(x^2+y^{n+1}),\\
         w_1w_2=w_3, \qquad w_2w_3=(x^2+y^{n+1})w_1, \qquad w_3w_1=yw_2.
	    \end{split}
	\end{equation*}
	Simplifying and eliminating redundant equations, we conclude that $X$ is defined locally around $P$ by:
$$w_2^2=x^2+w_1^{2n+2},$$
	which is the equation of a singularity of type $A_{2n+1}$.
\end{proof}

\begin{Remark}
	It follows from the proof of Corollary \ref{Z22Sing}, i) that 
	if both $B_1$ and $B_2$ are smooth at $Q$, they intersect in such a way that $B_1+B_2$ has a singularity of type $A_{1}$ at $Q$ and $Q$ is not contained in $B_3$, then  $X$ is smooth at $P$.
\end{Remark}

\begin{Remark}\label{DoubleSing}
	It can be proved as in Corollary \ref{Z22Sing} that an ADE singularity of $B_i,i\in\{1,2,3\}$ that is disjoint from the two other divisors of the branch locus induces two singularities of the same type on $X$.
\end{Remark}

For more information about the singularities arising from bidouble covers we address the reader to \cite{MR879190}. See also \cite{MR1103912}.

	\section{Basics on the Picard number}\label{BasicsPicardNumber}

First of all, since $h^{1,1}(X)$ is an upper bound on the Picard number $\rho(X)$ of a smooth projective surface $X$, we start this section with a formula that
will be used implicitly several times throughout this paper (cf. \cite{MR3439840}):
\begin{equation*}\label{h11}
	h^{1,1}(X)=10\chi(\mathcal{O}_X)-K^2_X-2q(X).
\end{equation*}

Now, from the definition of the Picard number of a smooth projective surface given in Section \ref{Introduction} we get the following:

\begin{Remark}\label{RankBunch}
	Let $M$ be the intersection matrix of a finite set of divisors on the smooth projective surface $X$. Then:
	\begin{equation*}
		\rho(X)\geq \rank(M).
	\end{equation*}    
\end{Remark}

Although it is straightforward from 
Remark \ref{RankBunch},  the following result will be fundamental in order to prove Theorem \ref{CorollaryFirstFam}.

\begin{Lemma}
    \label{PicardSurjectiveMorphism}
	Let $X$ be a canonical model with minimal resolution $\hat{X}\to X$ and whose singular set consists of:
	\begin{itemize}
		\item $\alpha_i$ singularities of type $A_i, i\in\mathbb{Z}_{\geq 1}$,
		\item $\beta_j$ singularities of type $D_j, j\in\mathbb{Z}_{\geq 4}$,
		\item $\gamma_k$ singularities of type $E_k, k\in\{6,7,8\}$.
	\end{itemize}
	Let us suppose that $X$ admits a surjective morphism $\pi:X\to Y$ onto a smooth and projective surface $Y$ and there exist numerically independent divisors $C_1,\ldots, C_n$ on $Y$.
	Then:
	\begin{equation*}
		\rho(\hat{X})\geq \sum_{i} i\cdot\alpha_i + 
		\sum_{j} j\cdot\beta_j+\sum_{k} k\cdot\gamma_k+n.
	\end{equation*}
\end{Lemma}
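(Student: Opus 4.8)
The plan is to exhibit $N+n$ divisor classes on $\hat{X}$, where $N=\sum_i i\cdot\alpha_i+\sum_j j\cdot\beta_j+\sum_k k\cdot\gamma_k$, whose numerical classes are linearly independent, so that the bound follows from Remark \ref{RankBunch}. These divisors split into two groups: the exceptional curves of the resolution $\sigma\colon\hat{X}\to X$ and the pullbacks of $C_1,\ldots,C_n$. First I would recall that, since $X$ has only $ADE$ singularities, the exceptional locus of $\sigma$ over a singular point of type $A_i$ (resp. $D_j$, $E_k$) consists of exactly $i$ (resp. $j$, $k$) smooth rational $(-2)$-curves whose dual graph is the corresponding Dynkin diagram. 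Hence there are $N$ exceptional curves in total, and their intersection matrix is block-diagonal with each block equal to the negative of the Cartan matrix of an $ADE$ root system. As these Cartan matrices are negative definite, the whole exceptional block is nonsingular, so the $N$ exceptional curve classes are numerically independent.

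Next I would bring in the pullbacks. Let $\hat{\pi}=\pi\circ\sigma\colon\hat{X}\to Y$ be the composite morphism. Since $\pi$ is surjective between surfaces it is generically finite, say of degree $d\geq 1$, and so is $\hat{\pi}$. Consider the classes $\hat{\pi}^*C_1,\ldots,\hat{\pi}^*C_n$. Each exceptional curve $E$ of $\sigma$ is contracted by $\hat{\pi}$ to a point of $Y$, so $\hat{\pi}_*E=0$, and the projection formula yields $E\cdot\hat{\pi}^*C_\ell=\hat{\pi}_*E\cdot C_\ell=0$ for every $\ell$; that is, the pullbacks are orthogonal to the whole exceptional lattice. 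The projection formula also gives $\hat{\pi}_*\hat{\pi}^*D=d\,D$ for every divisor $D$ on $Y$, whence $\hat{\pi}^*$ is injective on numerical classes, and the numerical independence of $C_1,\ldots,C_n$ is inherited by $\hat{\pi}^*C_1,\ldots,\hat{\pi}^*C_n$.

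Finally I would combine the two groups. Suppose $\sum_s a_s E_s+\sum_\ell b_\ell\,\hat{\pi}^*C_\ell\equiv 0$ numerically. Intersecting with each exceptional curve $E_t$ and using the orthogonality above leaves $\sum_s a_s(E_s\cdot E_t)=0$ for all $t$; since the exceptional block is nonsingular, all $a_s=0$. The surviving relation $\hat{\pi}^*\big(\sum_\ell b_\ell C_\ell\big)\equiv 0$ then forces $\sum_\ell b_\ell C_\ell\equiv 0$ by injectivity of $\hat{\pi}^*$, hence all $b_\ell=0$. Thus the $N+n$ classes are numerically independent and $\rho(\hat{X})\geq N+n$, as claimed.

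I expect the main obstacle to be the bookkeeping for the interaction between the two groups of divisors rather than any single hard estimate: the intersection matrix of the $C_\ell$ alone need not be nonsingular, so the extra $n$ independent directions are produced not by that matrix but by the combination of two facts, namely that the pullbacks $\hat{\pi}^*C_\ell$ are orthogonal to the negative-definite exceptional lattice and that pullback along the generically finite morphism $\hat{\pi}$ is injective on numerical classes. Verifying these two facts carefully, via the projection formula and the contraction of the exceptional curves by $\hat{\pi}$, is the crux of the argument.
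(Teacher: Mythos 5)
Your proof is correct, and its overall architecture is the same as the paper's: separate the classes into the exceptional $(-2)$-curves of the resolution (whose intersection matrix is a direct sum of negatives of ADE Cartan matrices, hence nonsingular) and the pullbacks of $C_1,\ldots,C_n$, and use the projection formula to see that the two groups are orthogonal. The genuine difference is how the pullback block is certified to contribute $n$ independent classes. The paper's argument asserts that, because $C_1,\ldots,C_n$ are numerically independent, the intersection matrix $M_2$ of the pullbacks has $\rank(M_2)=n$, and then applies Remark \ref{RankBunch} to the block-diagonal matrix. That implication is not valid in general: by the projection formula $M_2$ is $\deg(\hat{\pi})$ times the Gram matrix $(C_i\cdot C_j)$, and numerical independence does not force this Gram matrix to be nonsingular --- a single fiber class $F$ on a ruled surface is numerically nontrivial yet has Gram matrix $(0)$ --- so the paper's computation implicitly relies on the nondegeneracy of $(C_i\cdot C_j)$, which does hold in all of its applications (a line in $\mathbb{P}^2$; the negative section and a fiber of $\mathbb{F}_e$). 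You instead prove that $\hat{\pi}^*$ is injective on numerical classes (from $\hat{\pi}^*D\cdot\hat{\pi}^*D'=\deg(\hat{\pi})\,D\cdot D'$) and deduce numerical independence of all $N+n$ classes; this proves the lemma exactly under its stated hypothesis, with no nondegeneracy assumption, so your route is the more robust one.

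One small correction of framing: your opening sentence routes the conclusion through Remark \ref{RankBunch}, but that remark bounds $\rho(\hat{X})$ by the rank of the intersection matrix, which for your $N+n$ classes equals $N+\rank\bigl((C_i\cdot C_j)\bigr)$ and may be smaller than $N+n$. What your final paragraph actually (and correctly) uses is that numerically independent classes span a subgroup of rank $N+n$ in $NS(\hat{X})$, so $\rho(\hat{X})\geq N+n$ by the very definition of the Picard number; state the conclusion that way rather than via Remark \ref{RankBunch} and the proof is complete.
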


\begin{Remark}
	Throughout this note we will just apply Lemma \ref{PicardSurjectiveMorphism} under the following circumstances:
	\begin{itemize}
		\item $\pi\colon X\to \mathbb{P}^2$   is a $\mathbb{Z}_2^2$-cover of the projective plane $\mathbb{P}^2$, $n=1$ and $C_1$ is a line.
		\item $\pi\colon X\to \mathbb{F}_e$ is a $\mathbb{Z}_2^2$-cover of the Hirzebruch surface $\mathbb{F}_e$ for some integer $e \geq 0$, $n=2$ and the divisors $C_1,C_2$ are the negative section and a fiber.
	\end{itemize}
\end{Remark} 

We finish this section with an adaptation of a result of Persson that, although not strictly related to Picard numbers, will be crucial in order to construct surfaces with maximal Picard number.

\begin{Lemma}[{\cite[Lemma 4.1]{MR661198}}]\label{PerssonsTrick}
	Given a Hirzebruch surface $\mathbb{F}_e$ with two disjoint fibers $F_1,F_2$ there is for
	each integer $d$ a unique $d$-cyclic cover $\pi_d\colon \mathbb{F}_{de} \to \mathbb{F}_e$ branched at $F_1 $ and $ F_2$. Furthermore:
	\begin{itemize}
		\item[a)] If $C\cong \mathcal{O}_{\mathbb{F}_e}(a\Delta_0+b F)$ is an effective divisor not having $F_1$ and $F_2$ as components then $C_d:=\pi_d^*C\cong \mathcal{O}_{\mathbb{F}_{de}}(a\Delta_0+dbF)$.
		\item[b)] If $C$ has at most ADE singularities, then $C_d$ has at most ADE singularities if and only if the only singularities of
		$C$ lying on $F_1$ or $F_2$ are of type $A_n$ and they are transversal to these fibers.
		\item[c)] A singularity of type $A_{2n-1}$ of $C$ on $F_i,i\in\{1,2\}$ that is transversal to this fiber gives rise to a singularity of type $A_{2dn-1}$ of $C_d$. 
		\item[d)] A singularity of $C$ not lying on $F_1\cup F_2$ gives rise to $d$ singularities of the same type on $C_d$.
	\end{itemize}
\end{Lemma}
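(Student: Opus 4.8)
The plan is to realise $\pi_d$ by base change from the unique cyclic cover of the base $\mathbb{P}^1$ of the ruling, and then to read off the four properties from that description. Write $p\colon\mathbb{F}_e\to\mathbb{P}^1$ for the ruling and let $p_1,p_2\in\mathbb{P}^1$ be the points with $F_i=p^{-1}(p_i)$. After choosing coordinates so that $p_1=[0:1]$ and $p_2=[1:0]$, the morphism $\phi\colon\mathbb{P}^1\to\mathbb{P}^1$, $[s:t]\mapsto[s^d:t^d]$, is a $d$-cyclic cover totally ramified exactly over $p_1$ and $p_2$. First I would set $\mathbb{F}_{de}:=\mathbb{F}_e\times_{\mathbb{P}^1}\mathbb{P}^1$, the fibre product of $p$ and $\phi$, and let $\pi_d$ be the first projection. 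Since $\phi$ has degree $d$ one has $\phi^*\mathcal{O}_{\mathbb{P}^1}(-e)\cong\mathcal{O}_{\mathbb{P}^1}(-de)$, so the second projection identifies this fibre product with $\mathbb{P}(\mathcal{O}\oplus\mathcal{O}(-de))=\mathbb{F}_{de}$, and $\pi_d$ is a finite degree-$d$ morphism branched exactly over $p^{-1}\{p_1,p_2\}=F_1\cup F_2$, carrying the deck action of $\phi$; this gives existence. For uniqueness I would restrict to $\mathbb{F}_e\setminus(F_1\cup F_2)$, a $\mathbb{P}^1$-bundle over $\mathbb{P}^1\setminus\{p_1,p_2\}$, whose fundamental group is $\mathbb{Z}$; hence there is a single connected $d$-cyclic cover, which then extends uniquely by normalisation. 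Equivalently, such a cover is determined by a line bundle $L$ with $L^{\otimes d}\cong\mathcal{O}_{\mathbb{F}_e}(dF)$, which forces $L\cong\mathcal{O}_{\mathbb{F}_e}(F)$ because $\Picard(\mathbb{F}_e)$ is torsion free, together with the essentially unique section cutting out $F_1+(d-1)F_2$. The case $e=0$ is identical under the stated abuse of notation.

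For a) I would compute the pullbacks of the two generators of $\Picard(\mathbb{F}_e)$ separately. Being a section of the ruling not contained in the branch locus, $\Delta_0$ has irreducible preimage a single section $\Delta_0'$ of $\mathbb{F}_{de}$, and the projection formula gives $(\Delta_0')^2=d\,\Delta_0^2=-de$, so $\Delta_0'$ is the negative section and $\pi_d^*\Delta_0\cong\Delta_0'$. Writing $p\circ\pi_d=\phi\circ p'$ for the second ruling $p'\colon\mathbb{F}_{de}\to\mathbb{P}^1$, a general fibre satisfies $\pi_d^*F=p'^*\phi^*(\mathrm{pt})\cong dF'$. Hence $\pi_d^*(a\Delta_0+bF)\cong a\Delta_0'+dbF'\cong\mathcal{O}_{\mathbb{F}_{de}}(a\Delta_0+dbF)$, which is a).

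Properties b)–d) are local. Away from $F_1\cup F_2$ the cover $\pi_d$ is étale, so each ADE singularity (or smooth point) of $C$ disjoint from the fibres has exactly $d$ preimages of the same analytic type, giving d). Near a point $P$ of $F_i$ I would choose coordinates $(w,v)$ on $\mathbb{F}_e$ with $F_i=\{w=0\}$ and $v$ along the fibre, so that $\pi_d$ reads $(z,v)\mapsto(z^d,v)$ and, writing $C=\{g(w,v)=0\}$ with $w\nmid g$, the cover $C_d$ is locally $\{g(z^d,v)=0\}$. If $C$ has at $P$ an $A_{2n-1}$ singularity transversal to $F_i$, then after an analytic change of coordinates $g=v^2-w^{2n}$, whence $g(z^d,v)=v^2-z^{2dn}$ defines an $A_{2dn-1}$ singularity; this is c). The same substitution applied to the normal form $v^2-w^{m+1}$ of an arbitrary transversal $A_m$ point (and to $v-\psi(w)$ at a transversal smooth point) shows that every transversal intersection of $C$ with the fibres contributes only an $A$-type or a smooth point to $C_d$, yielding the ``if'' direction of b).

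The main obstacle is the ``only if'' direction of b). The mechanism is that the substitution $w\mapsto z^d$ multiplies the contact orders of $C$ along the fibre direction by $d$ while leaving the multiplicity generated purely in the $v$-direction unchanged: a multiplicity-two ($A_n$) point transversal to $F_i$ keeps $v^2$ as its leading form and stays of type $A$, whereas a point of multiplicity at least three on $F_i$ — necessarily of type $D$ or $E$ if $C$ is ADE — has its branches, originally separated in the $w$-direction, forced into mutual tangency, so that its tangent cone collapses to a non-reduced line and $C_d$ acquires a non-simple singularity. Carrying out this local classification at every point of $C\cap(F_1\cup F_2)$, checking that transversality is exactly the condition preventing the relevant singularity invariants from leaving the ADE range, and assembling the contributions with d), is the delicate point that completes b).
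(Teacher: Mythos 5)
First, a point of comparison: the paper does not prove this lemma at all --- it is quoted, with notation adapted, from Persson's paper \cite{MR661198} (Lemma 4.1), so there is no in-house argument to measure yours against. On its own terms, your construction of the cover as the base change of the ruling $\mathbb{F}_e\to\mathbb{P}^1$ along $[s:t]\mapsto[s^d:t^d]$, the uniqueness via $\pi_1\bigl(\mathbb{F}_e\setminus(F_1\cup F_2)\bigr)\cong\mathbb{Z}$, and the verifications of a), c), d) and of the ``if'' half of b) at transversal points via the local model $(z,v)\mapsto(z^d,v)$ are all correct; these are precisely the pieces of the lemma that the paper actually invokes.

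The genuine gap is in b). For the ``only if'' direction you offer a mechanism (``the tangent cone collapses to a non-reduced line''), not a proof: the case analysis over the finitely many local types still has to be run. For instance, a $D_4$ point of $C$ on $F_i$ with all three branches transverse to the fiber pulls back to three smooth branches pairwise tangent to order $d$; its tangent cone is a triple line, but so is the tangent cone of $E_6$, $E_7$, $E_8$ and (after one blow-up) the configuration resembles a $D_n$, so ``non-reduced tangent cone'' by itself does not exclude simplicity --- you need an actual invariant (here $\mu=6d-2>8$ rules out the $E$'s, and no $D_n$ has all three branches pairwise tangent), and analogous checks for non-transversal $A_n$ points. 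Moreover, your ``if'' direction only examines transversal intersections of $C$ with the fibers, whereas the hypothesis of b) as stated constrains only the \emph{singular} points of $C$ on $F_1\cup F_2$: at a point where $C$ is smooth but tangent to $F_i$ the conclusion can genuinely fail (locally $w=v^3$ with $d=6$ gives $z^6=v^3$, three pairwise tangent smooth branches, which is not ADE). So either such points must be shown to be excluded by the intended reading of ``transversal'' in Persson's statement, or your argument for the ``if'' direction is incomplete as well. None of this affects the paper, since in all of its applications every intersection of the relevant curves with the two distinguished fibers is either transversal or a transversal $A_n$ point.
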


	\section{Two families of surfaces with maximal Picard number}\label{Proof of the main results}

This section is devoted to construct two families of surfaces of general type with maximal Picard number that will allow to prove Theorem \ref{CorollaryFirstFam}, Theorem \ref{EvenHorikawaMaxPicard} and Theorem \ref{OddHorikawaMaxPicard}. As we will see in Remark \ref{ComparingFamilies}, the contribution of the second family to the proof of these results is very small.
Nevertheless, the authors decided to include the whole family due to the scarcity of examples of surfaces of general type with maximal Picard number in the literature.\\

The main idea behind these constructions goes as follows. First, a highly singular configuration of curves $\{C_{\alpha}\}_{\alpha}$ on $\mathbb{P}^2$  is considered. Second, a point of $\mathbb{P}^2$ is blown-up so that $\{C_{\alpha}\}_{\alpha}$ induces a highly singular configuration of curves  $\{\overline{C}_{\alpha}\}_{\alpha}$ on $\mathbb{F}_1$. Next, Lemma \ref{PerssonsTrick} is applied. Using its notation, let us denote by $\{\tilde{C}_{\alpha}\}_{\alpha}$ the configuration of curves on $\mathbb{F}_d$ induced by $\{\overline{C}_{\alpha}\}_{\alpha}$ via $\pi_d$. Then, the singularities of type $A_{2n-1}$ of $\sum_{\alpha}\overline{C}_{\alpha}$ lying on $F_1\cup F_2$ become singularities of type $A_{2dn-1}$ on $\sum_{\alpha}\tilde{C}_{\alpha}$ and the singularities of $\sum_{\alpha}\overline{C}_{\alpha}$ not lying on $F_1\cup F_2$ are multiplied by $d$ on $\sum_{\alpha}\tilde{C}_{\alpha}$. Finally, fibers $\{F_{\beta}\}_{\beta}$ of $\mathbb{F}_d$ passing through singularities of type $A_{m}$ of $\sum_{\alpha}\tilde{C}_{\alpha}$ are chosen so that $\sum_{\alpha}\tilde{C}_{\alpha}+\sum_{\beta} F_{\beta}$ has many 
singularities of type $D_{m+3}$. Canonical models $X$  whose minimal resolution has maximal Picard number will be obtained as  $\mathbb{Z}_2^2$-covers $\varphi\colon X\to \mathbb{F}_d$ whose branch locus is made of the curves $\{\tilde{C}_{\alpha}\}_{\alpha} \cup \{ F_{\beta}\}_{\beta}$.

\subsection{A first family of surfaces with maximal Picard number}

The first family of surfaces of general type with maximal Picard number is described in the following:

\begin{Theorem}
    \label{FirstFam}
	Let $n,m,k$ be non-negative integers such that $(n,m,k)\neq (1,0,0), m\leq 2n+k\neq 0$ and  $m\equiv k (2)$ and denote $K^2=6n+2m+5k-4,
	\chi =3n+m+2k+1$. Then $\mathfrak{M}_{K^2,\chi}$ contains canonical models  whose minimal resolution has maximal Picard number.
\end{Theorem}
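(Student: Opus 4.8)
The plan is to construct, for each admissible triple $(n,m,k)$ as in the hypothesis, an explicit $\mathbb{Z}_2^2$-cover $\varphi\colon X\to\mathbb{F}_d$ realizing the prescribed invariants $K^2=6n+2m+5k-4$, $\chi=3n+m+2k+1$, and to arrange its branch locus so that $X$ has so many ADE singularities that the bound of Lemma~\ref{PicardSurjectiveMorphism} already forces maximal Picard number on the minimal resolution $\hat X$. Following the strategy outlined in the introduction to this section, I would first fix a highly singular configuration $\{C_\alpha\}_\alpha$ of curves on $\mathbb{P}^2$, blow up a point to pass to $\mathbb{F}_1$, and then apply Persson's Lemma~\ref{PerssonsTrick} with the appropriate cyclic degree $d$ (presumably $d$ determined by $n$, since the $\Delta_0$-coefficients are preserved while the fiber coefficients get multiplied by $d$) to obtain a configuration $\{\tilde C_\alpha\}_\alpha$ on $\mathbb{F}_d$ whose $A_{2\nu-1}$ singularities on the two distinguished fibers have been amplified to $A_{2d\nu-1}$.

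Next I would specify the three branch divisors $B_1,B_2,B_3\cong\mathcal{O}_{\mathbb{F}_d}(a_i\Delta_0+b_iF)$ built from the $\tilde C_\alpha$ together with auxiliary fibers $\{F_\beta\}_\beta$ chosen through the amplified $A_m$-singularities, and the line bundles $L_1,L_2,L_3$ satisfying Catanese's compatibility conditions $2L_1\cong B_2+B_3$, $2L_2\cong B_1+B_3$, $L_3\cong L_1+L_2-B_3$. Using Proposition~\ref{Z22Formulas} I would verify that $K_X^2=(2K_{\mathbb{F}_d}+B_1+B_2+B_3)^2$ and $\chi(\mathcal{O}_X)=4\chi(\mathcal{O}_{\mathbb{F}_d})+\tfrac12\sum_i L_i(L_i+K_{\mathbb{F}_d})$ evaluate to the claimed $6n+2m+5k-4$ and $3n+m+2k+1$; these are bookkeeping computations in the numerical Chow group of $\mathbb{F}_d$. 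I would also check that the canonical class is ample (so that $X$ is genuinely a canonical model, not merely a surface with $K$ nef) and that the only singularities of $X$ are ADE, invoking Corollary~\ref{Z22Sing} to translate each singularity of $B_1+B_2$ on $\mathbb{F}_d$ into a singularity of $X$: transversal $A_{2\nu+1}$-contacts give $A_\nu$, the $D_{2\nu+4}$-configurations give $D_{\nu+3}$, and so on.

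The decisive step is the Picard-number count. Having tabulated the singularity types $\alpha_i$ (of $A_i$), $\beta_j$ (of $D_j$), $\gamma_k$ (of $E_k$) of $X$, Lemma~\ref{PicardSurjectiveMorphism}, applied with $\pi=\varphi$, $Y=\mathbb{F}_d$, $n=2$ and $C_1,C_2$ the negative section and a fiber, yields
\begin{equation*}
\rho(\hat X)\geq \sum_i i\cdot\alpha_i+\sum_j j\cdot\beta_j+\sum_k k\cdot\gamma_k+2.
\end{equation*}
Independently, the formula $h^{1,1}(\hat X)=10\chi(\mathcal{O}_{\hat X})-K^2_{\hat X}-2q(\hat X)$ gives an explicit value, with $q(\hat X)=0$ since our covers of rational surfaces are regular. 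The whole construction is engineered so that the right-hand side of the inequality equals this $h^{1,1}$; since $\rho\le h^{1,1}$ always, both must coincide and $\hat X$ has maximal Picard number. The main obstacle, and where the ingenuity resides, is exactly the design problem: choosing the degree $d$, the curves $\{C_\alpha\}_\alpha$, and the extra fibers $\{F_\beta\}_\beta$ so that the total Milnor-number contribution $\sum i\alpha_i+\sum j\beta_j+\sum k\gamma_k$ hits the target $h^{1,1}-2$ on the nose for \emph{every} admissible $(n,m,k)$ simultaneously — this requires the singular configuration to scale correctly in all three parameters, and verifying that no singularities degenerate beyond ADE (using part~b) of Lemma~\ref{PerssonsTrick}) is the most delicate bookkeeping.
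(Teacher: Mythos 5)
There is a genuine gap: your text correctly identifies the method --- and it coincides with the strategy the paper itself announces at the start of Section \ref{Proof of the main results} (plane configuration, blow-up to $\mathbb{F}_1$, amplification via Lemma \ref{PerssonsTrick}, bidouble cover, singularity count fed into Lemma \ref{PicardSurjectiveMorphism}) --- but it never actually produces a configuration, branch data, or singularity tally. Every step is phrased conditionally (``I would fix\dots'', ``I would specify\dots''), and you concede in your last sentence that the ``design problem'' of choosing $d$, the curves $\{C_\alpha\}_\alpha$ and the auxiliary fibers $\{F_\beta\}_\beta$ so that the exceptional-curve count hits $h^{1,1}-2$ for \emph{every} admissible $(n,m,k)$ is where the ingenuity resides. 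That design problem \emph{is} the theorem; leaving it unsolved leaves the statement unproved. There is no argument here that could fail or succeed --- there is simply no argument yet.

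For comparison, the paper's solution to the design problem is: take four general lines $l_1,\dots,l_4$ in $\mathbb{P}^2$ with vertices $P_1,\dots,P_6$, adjoin the diagonals $l_5=\langle P_1,P_6\rangle$ and $l_6=\langle P_2,P_4\rangle$ meeting at $Q$, and two further lines $l_7,l_8$ through $Q$; blow up $Q$ so that $l_5,\dots,l_8$ become fibers of $\mathbb{F}_1$, and take the $(2n+k)$-cyclic cover $\mathbb{F}_{2n+k}\to\mathbb{F}_1$ branched at $l_5'+l_6'$ (so $d=2n+k$, not a function of $n$ alone as you guessed). The branch divisors are $B_1=\Delta_0$, $B_2=\tilde l_4$ plus $k$ fibers over $l_7$, and $B_3=\tilde l_1+\tilde l_2+\tilde l_3+\tilde l_5+\tilde l_6$ plus $m$ fibers over $l_8$; the parity hypothesis $m\equiv k\,(2)$ is exactly what makes the $L_i$ integral. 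The resulting singular set of $X$ ($4$ points of type $D_{4n+2k+2}$, $2m$ of type $D_4$, $4n+2k$ of type $A_1$, $2$ of type $D_{2n+k+2}$ and $k$ of type $A_3$, obtained from Corollary \ref{Z22Sing} and Remark \ref{DoubleSing}) contributes $24n+8m+15k+12$ to the bound of Lemma \ref{PicardSurjectiveMorphism}, which together with the two classes pulled back from $\mathbb{F}_{2n+k}$ equals $h^{1,1}(X)=24n+8m+15k+14$. None of this arithmetic, nor the ampleness of $K_X$, can be extracted from your proposal. A further small caveat: you assert $q=0$ ``since our covers of rational surfaces are regular'', but bidouble covers of rational surfaces need not be regular in general (a product of hyperelliptic curves is a bidouble cover of $\mathbb{P}^1\times\mathbb{P}^1$); the paper instead deduces $q=0$ from the values of $p_g$ and $\chi$ computed via Proposition \ref{Z22Formulas}.
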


Before proving Theorem \ref{FirstFam}, we are going to show how Theorem \ref{CorollaryFirstFam} can be derived from it.

\begin{proof}[Proof of Theorem \ref{CorollaryFirstFam}]
	Since $\chi\geq 2k+10$ we can write
	$\chi-(2k+1)=3a+b$
	for some integers $a\geq 3, b\in\{0,1,2\}$. If $b\equiv k (2)$ we set $n=a, m=b$. If $b\not\equiv k (2)$ we set $n=a-1, m=b+3$. The result follows from Theorem \ref{FirstFam}.
\end{proof}

Let us see the proof of Theorem \ref{FirstFam} now.

\begin{proof}[Proof of Theorem \ref{FirstFam}]
	
	Let $ l_1, l_2, l_3, l_4 \in|\mathcal{O}_{\mathbb{P}^2}(1)|$ be four lines in general position and denote:
	\begin{equation*}
		P_1=l_1\cap l_2,\qquad
		P_2=l_2\cap l_3,\qquad
		P_3=l_1\cap l_3,\qquad
		P_4=l_1\cap l_4,\qquad
		P_5=l_2\cap l_4,\qquad
		P_6=l_3\cap l_4.
	\end{equation*}

	Let us also consider:
	\begin{equation*}
		l_5:=\langle P_1,P_6\rangle ,\qquad l_6:=\langle  P_2, P_4 \rangle ,\qquad Q:=l_5 \cap l_6,\qquad l_7:=\langle  Q,P_5\rangle , \qquad l_8:=\langle Q, P_3\rangle , \qquad P_7:=l_2 \cap l_8.
	\end{equation*}
	
	\begin{center}

		\tikzset{every picture/.style={line width=0.75pt}} 
		
		\begin{tikzpicture}[x=0.75pt,y=0.75pt,yscale=-1,xscale=1]
			
			\draw [color={rgb, 255:red, 74; green, 144; blue, 226 }  ,draw opacity=1 ]   (154,157) -- (402,157) ;
			\draw [color={rgb, 255:red, 74; green, 144; blue, 226 }  ,draw opacity=1 ]   (103,312) -- (302,29) ;
			\draw [color={rgb, 255:red, 74; green, 144; blue, 226 }  ,draw opacity=1 ]   (264,35) -- (446,258) ;
			\draw [color={rgb, 255:red, 139; green, 87; blue, 42 }  ,draw opacity=1 ]   (115,225) -- (455,75) ;
			\draw [color={rgb, 255:red, 126; green, 211; blue, 33 }  ,draw opacity=1 ]   (287,19) -- (253,266) ;
			\draw [color={rgb, 255:red, 126; green, 211; blue, 33 }  ,draw opacity=1 ]   (78,216) -- (489,132) ;
			\draw [color={rgb, 255:red, 208; green, 2; blue, 27 }  ,draw opacity=1 ]   (444,51) -- (91,302) ;
			\draw [color={rgb, 255:red, 68; green, 119; blue, 15 }  ,draw opacity=1 ]   (107,115) -- (506,273) ;
			
			\draw (265,179) node [anchor=north west][inner sep=0.75pt]   [align=left] {$ Q $};
			\draw (288,48) node [anchor=north west][inner sep=0.75pt]   [align=left] {$ P_1 $};
			\draw (352,161) node [anchor=north west][inner sep=0.75pt]   [align=left] {$ P_2 $};
			\draw (198,135) node [anchor=north west][inner sep=0.75pt]   [align=left] {$ P_3 $};
			\draw (185,197) node [anchor=north west][inner sep=0.75pt]   [align=left] {$ P_4 $};
			\draw (437,228) node [anchor=north west][inner sep=0.75pt]   [align=left] {$ P_7 $};
			\draw (248,139) node [anchor=north west][inner sep=0.75pt]   [align=left] {$ P_6 $};
			\draw (333,103) node [anchor=north west][inner sep=0.75pt]   [align=left] {$ P_5 $};
			\draw (238,83) node [anchor=north west][inner sep=0.75pt]   [align=left] {$ l_1 $};
			\draw (310,77) node [anchor=north west][inner sep=0.75pt]   [align=left] {$ l_2 $};
			\draw (156,160) node [anchor=north west][inner sep=0.75pt]   [align=left] {$ l_3 $};
			\draw (118,222) node [anchor=north west][inner sep=0.75pt]   [align=left] {$ l_4 $};
			\draw (260,249) node [anchor=north west][inner sep=0.75pt]   [align=left] {$ l_5 $};
			\draw (472,136) node [anchor=north west][inner sep=0.75pt]   [align=left] {$ l_6 $};
			\draw (196,224) node [anchor=north west][inner sep=0.75pt]   [align=left] {$ l_7 $};
			\draw (349,215) node [anchor=north west][inner sep=0.75pt]   [align=left] {$ l_8 $};

		\end{tikzpicture}

	\end{center}

	Let $ \pi\colon \mathbb{F}_1 \to \mathbb{P}^2 $ be the blow-up of $ \mathbb{P}^2 $ at $ Q $. We denote by:
	\begin{center}
		\begin{tabular}{l l}
			$ \overline{l}_1, \overline{l}_2, \overline{l}_3, \overline{l}_4 $: & the pull-back of  $ l_1, l_2, l_3, l_4$, respectively,\\
			$ l'_5, l'_6, l'_7, l'_8 $:&the strict transforms of $ l_5 $, $ l_6 $, $ l_7 $, $ l_8 $, respectively.
		\end{tabular}	
	\end{center}

    Note that 
    $\overline{l}_i\cong \mathcal{O}_{\mathbb{F}_1}(\Delta_0+F)$ for all $i\in\{1,2,3,4\}$ and  $l'_i\cong \mathcal{O}_{\mathbb{F}_1}(F)$ for all $i\in\{5,6,7,8\}$.\\
    

	Let $\psi\colon \mathbb{F}_{2n+k} \to \mathbb{F}_1  $ be the $ (2n+k) $-cyclic cover branched at $ l'_5 + l'_6 $ (see Lemma \ref{PerssonsTrick}).\\

  Firstly, since $\overline{l}_i\cong \mathcal{O}_{\mathbb{F}_1}(\Delta_0+F)$  is an effective divisors not having $l'_5 $ and $ l'_6$ as components for all $i\in\{1,2,3,4\}$,  then
    $\psi^{*}\left( \overline{l}_i\right) \in |\mathcal{O}_{\mathbb{F}_{2n+k}}(\Delta_0 + (2n+k)F)|$  for all $i\in\{1,2,3,4\}$
   by Lemma \ref{PerssonsTrick}, where $\psi^*$ is the pull-back of $\psi$. Set $\tilde{l}_i :=\psi^{*}\left( \overline{l}_i\right) \in |\mathcal{O}_{\mathbb{F}_{2n+k}}(\Delta_0 + (2n+k)F)|$ for all $i \in \{ 1,2,3,4\}$.\\

    Secondly, being $\psi$   branched at $ l'_5 + l'_6 $, we obtain that    
    \begin{equation*}
		\begin{split}			
			\psi^{*}\left( l'_5\right) & =(2n+k)\tilde{l}_5,\\
			\psi^{*}\left( l'_6\right) & =(2n+k)\tilde{l}_6,
		\end{split}
	\end{equation*}
    
    for some fibers $\tilde{l}_5, \tilde{l}_6 \in |\mathcal{O}_{\mathbb{F}_{2n+k}}(F)|$.\\

    Lastly, because the divisors $ l'_7$ and $ l'_8 $ do not meet the branch locus $ l'_5 + l'_6$ of $\psi$, we get
    \begin{equation*}
		\begin{split}			
			\psi^{*}\left( l'_7\right) & 
			=\tilde{l}^{1}_7+\tilde{l}^{2}_7+\cdots + \tilde{l}^{2n+k}_7,\\
			\psi^{*}\left( l'_8\right) & 
			=\tilde{l}^{1}_8+\tilde{l}^{2}_8+\cdots + \tilde{l}^{2n+k}_8,
		\end{split}
	\end{equation*}
    
    for some fibers $\tilde{l}^{1}_7,\tilde{l}^{2}_7,\cdots, \tilde{l}^{2n+k}_7, \tilde{l}^{1}_8, \tilde{l}^{2}_8, \cdots, \tilde{l}^{2n+k}_8 \in |\mathcal{O}_{\mathbb{F}_{2n+k}}(F)|$.\\

	We consider the following divisors on $ \mathbb{F}_{2n+k} $:
	\begin{equation*}
		\begin{split}
			& B_{1} := \Delta_0\\
			& B_{2}: =\begin{cases} 
				\tilde{l}_4 \text{ if } k=0, \\
				\tilde{l}_4+ \tilde{l}^{1}_7+ \cdots + \tilde{l}^{k}_7 \text{ otherwise.}\end{cases} \\ 
			& B_{3} : = \begin{cases} 
				\tilde{l}_1 + \tilde{l}_2 + \tilde{l}_3 + \tilde{l}_5 + \tilde{l}_6 \text{ if } m=0, \\
				\tilde{l}_1 + \tilde{l}_2 + \tilde{l}_3 + \tilde{l}_5 + \tilde{l}_6+\tilde{l}^{1}_8+\cdots + \tilde{l}^{m}_8  \text{ otherwise.}
			\end{cases}
		\end{split}
	\end{equation*}
	
	Note that $B_1\in |\mathcal{O}_{\mathbb{F}_{2n+k}}(\Delta_0)|, B_2 \in |\mathcal{O}_{\mathbb{F}_{2n+k}}(\Delta_0 + (2n+2k)F)|, B_3 \in |\mathcal{O}_{\mathbb{F}_{2n+k}}(3\Delta_0 + \left( 6n+m+3k+2\right)F)| $. 
	We also consider the following line bundles on $ \mathbb{F}_{2n+k} $:
	\begin{equation*}
		\begin{split}
			& L_1:= \mathcal{O}_{\mathbb{F}_{2n+k}}\left(2\Delta_0+\left(4n+\frac{m+5k}{2}+1\right)F\right), \\
			& L_2:=\mathcal{O}_{\mathbb{F}_{2n+k}}\left(2\Delta_0+\left(3n+\frac{m+3k}{2}+1\right)F\right),\\
			& L_3:= \mathcal{O}_{\mathbb{F}_{2n+k}}\left(\Delta_0+(n+k)F\right).
		\end{split}
	\end{equation*}

	The building data $\{L_i,B_j\}_{i,j}$ defines a $ \mathbb{Z}^2_2 $-cover $ \varphi\colon X \to \mathbb{F}_{2n+k}$.
	On the one hand, taking into account Lemma \ref{PerssonsTrick}, the singular set of $B_3$ consists of:
	\begin{itemize}
		\item One singularity of type $ D_{4n+2k+2} $ coming from $ P_1 $ (note that the singularity of type $A_1$ of $\overline{l}_1 + \overline{l}_2$ induces a singularity of type $A_{4n+2k-1}$ on $\tilde{l}_1+\tilde{l}_2$ by Lemma \ref{PerssonsTrick} and it becomes a singularity of type $D_{4n+2k+2}$ if we add $\tilde{l}_5$).
		\item One singularity of type $ D_{4n+2k+2} $ coming from $ P_2 $ (the previous reasoning applies verbatim if we exchange $l_1,l_2,l_5$ by $l_2,l_3,l_6$ respectively).
		\item $m$ singularities of type $ D_{4} $ coming from $ P_3 $.
		\item $ 2n+k-m $ singularities of type $ A_1 $ coming from $ P_3 $.
		\item $m$ singularities of type $ A_{1} $ coming from $ P_7 $.
	\end{itemize}
	The singular set of $B_2+B_3$ restricted to $ B_2\cap B_3 $ consists of:
	\begin{itemize}
		\item One singularity of type $ D_{4n+2k+2} $ coming from $ P_4 $.
		\item One singularity of type $ D_{4n+2k+2} $ coming from $ P_6 $.
		\item $k$ singularities of type $ D_{4} $ coming from $ P_5 $.
	\end{itemize}
	
	Thus, the singular set of $X$ consists of: 
	\begin{itemize}
		\item $ 4 $ singularities of type $ D_{4n+2k+2} $ by Remark \ref{DoubleSing}.
		\item $ 2m $ singularities of type $ D_{4} $ by Remark \ref{DoubleSing}.
		\item $ 4n+2k$ singularities of type $ A_1 $ by Remark \ref{DoubleSing}.
		\item $ 2 $ singularities of type $ D_{2n+k+2} $ by Corollary \ref{Z22Sing},$ii)$.
		\item $k$ singularities of type $ A_3 $ by Corollary \ref{Z22Sing},$iii)$.
	\end{itemize}

	On the other hand, by Proposition \ref{Z22Formulas} we have that $K_X$ is ample because $2K_{X}$ is the pull-back via $\varphi$ of the ample divisor $\mathcal{O}_{\mathbb{F}_{2n+k}}\left(\Delta_0 + \left( 4n+m+3k-2\right)F\right)$. Moreover:
	\begin{equation*}
		\begin{split}
			K_{X}^2 & = \left( \Delta_0 + \left(4n+m+3k-2\right)F\right)^2=6n+2m+5k-4,\\
			\chi\left( \mathcal{O}_{X}\right) &=3n+m+2k+1,\\
			p_g\left( X\right) & =3n+m+2k,\\
			q\left( X\right)  & = 0,\\
			h^{1,1}\left( X\right) & =24n+8m+15k+14.
		\end{split}
	\end{equation*}	
	We conclude that $X\in\mathfrak{M}_{K^2,\chi}$ is a canonical model whose minimal resolution has 
	maximal Picard number $  h^{1,1}\left( X\right) = 24n+8m+15k+14$
	by Lemma \ref{PicardSurjectiveMorphism}.
\end{proof}

\begin{Remark} \label{ExtraCases1Fam}
	The construction considered in the proof of Theorem \ref{FirstFam} works for every admissible pair $(K^2,\chi)$ satisfying the conditions stated on Theorem \ref{CorollaryFirstFam}, but it also works in some other cases. For instance:
	\begin{itemize}
		\item When $K^2=2\chi-6$, this construction also produces canonical models $X$  in $\mathfrak{M}_{K^2,\chi}$  whose minimal resolution has maximal Picard number for the following pairs:
		\begin{center}
			\begin{tabular}{ |c||c|c|c|} 
				\hline
				$K^2$ &  $6$ &$8$&$12$ \\ 
				\hline 
				$\chi$ &  $6$ &$7$&$9$\\ 
				\hline
			\end{tabular}
		\end{center}
  Indeed, using the notation of Theorem \ref{FirstFam}, $(K^2,\chi)=(6,6)$ can be obtained setting $(n,m,k)=(1,2,0)$, $(K^2,\chi)=(8,7)$ can be obtained setting $(n,m,k)=(2,0,0)$ and $(K^2,\chi)=(12,9)$ can be obtained setting $(n,m,k)=(2,2,0)$. 
		\item When $K^2=2\chi-5$, this construction also produces canonical models $X$  in $\mathfrak{M}_{K^2,\chi}$  whose minimal resolution has maximal Picard number for the following pairs:
		\begin{center}
			\begin{tabular}{ |c||c|c|c|c|} 
				\hline
				$K^2$ & $3$ & $9$& $13$& $15$\\ 
				\hline 
				$\chi$ & $4$ & $7$& $9$& $10$\\ 
				\hline
			\end{tabular}
		\end{center}
  Indeed, using the notation of Theorem \ref{FirstFam}, $(K^2,\chi)=(3,4)$ can be obtained setting $(n,m,k)=(0,1,1)$, $(K^2,\chi)=(9,7)$ can be obtained setting $(n,m,k)=(1,1,1)$, $(K^2,\chi)=(13,9)$ can be obtained setting $(n,m,k)=(1,3,1)$ and $(K^2,\chi)=(15,10)$ can be obtained setting $(n,m,k)=(2,1,1)$. 
	\end{itemize}     
\end{Remark}

\begin{Remark}\label{ToArea}
On the one hand, let  $(K^2,\chi)$ be an  admissible pair
such that $K^2=2\chi-6+k$ and $\chi\geq 2k+10$ for a given integer $k\geq 0$. Then
\begin{equation*}
    K^2=2\chi-6+k\geq 2\chi-6
\end{equation*}
and
\begin{equation*}
    K^2=2\chi-6+k\leq 2\chi-6+\frac{\chi-10}{2}= \frac{5}{2}\chi-11.
\end{equation*}
	Hence,  admissible pairs $(K^2,\chi)$ satisfying the conditions stated on Theorem \ref{CorollaryFirstFam} lie on the region $2\chi -6 \leq K^2\leq \frac{5}{2}\chi-11$.
 
  On the other hand, given an admissible pair $(K^2, \chi)$ such that  $K^2\leq \frac{5}{2}\chi-11$ and denoting $k=K^2-(2\chi-6)$, we have that $K^2=2\chi-6+k$ and $\chi\geq 2k+10$ because:
 \begin{equation*}
     k=K^2-(2\chi-6)\leq \frac{5}{2}\chi-11-(2\chi-6)=\frac{\chi-10}{2}.
 \end{equation*}

 We conclude that the set of pairs describe in Theorem \ref{CorollaryFirstFam} is precisely the set of admissible pairs $(K^2,\chi)$ such that $K^2\leq \frac{5}{2}\chi-11$ (see Figure \ref{fig:K2chiplane}).
\end{Remark}

\begin{Remark}\label{density}
    It follows from Theorem \ref{CorollaryFirstFam} that given a rational number $q\in \left(2,\frac{5}{2}\right)$ there exists a canonical model $X$ whose minimal resolution has maximal Picard number and such that the quotient 
    $\frac{K^2_X}{\chi(\mathcal{O}_X)}$ is equal to $q$. In particular, the set of such quotients is dense in the interval $\left[2,\frac{5}{2}\right]$ by  the density of 
    $\mathbb{Q}$ in 
    $\mathbb{R}$.
    
    Indeed, given $q\in \left(2,\frac{5}{2}\right)\cap \mathbb{Q}$ let us consider $\frac{a}{b}:=q-2\in\left(0,\frac{1}{2}\right)$, where $a,b$ are positive integers. Then $\delta:=(b-2a)>0$ and there exists a positive integer $\lambda$ such that $\delta\lambda\geq 22$. We set $\chi=b\lambda , K^2=(a+2b)\lambda$ and we claim that $K^2\leq \frac{5}{2}\chi - 11$. In fact, this inequality is equivalent to $(a+2b)\lambda\leq \frac{5}{2} b \lambda - 11$, which is equivalent to  $2a\lambda+4b\lambda\leq 5b\lambda -22$, which holds true if and only if $22\leq (b-2a)\lambda=\delta\lambda$. Hence, $K^2\leq \frac{5}{2}\chi - 11$  by the definition of $\lambda$. Since it is clear that $(K^2, \chi)$ is a pair of strictly positive integers satisfying Noether's inequality, we conclude that $(K^2, \chi)$ satisfies the hypothesis of Theorem \ref{CorollaryFirstFam} by Remark \ref{ToArea} and there exists a canonical model $X\in\mathfrak{M}_{K^2,\chi}$ whose minimal resolution has maximal Picard number. Furthermore:
    \begin{equation*}
        \frac{K^2_X}{\chi(\mathcal{O}_X)}=\frac{(a+2b)\lambda}{b\lambda}=\frac{a}{b}+2=q-2+2=q.
    \end{equation*}
    The initial claim of this remark is now clear.
\end{Remark}

\subsection{A second family of surfaces with maximal Picard number}

The second family of surfaces of general type with maximal Picard number is described in the following:

\begin{Theorem}\label{SecondFam}
	Let $n,m,k$ be non-negative integers such that $k\leq m\leq 2n+k\neq 0$ and $k\equiv m (2)$
	and denote $K^2=10n+2m+5k-4, \chi =5n+m+2k+1$. Then $\mathfrak{M}_{K^2,\chi}$ contains canonical models  whose minimal resolution has maximal Picard number.
\end{Theorem}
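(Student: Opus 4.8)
The plan is to adapt the proof of Theorem~\ref{FirstFam} almost word for word, altering only the fiber-direction part of the branch locus. As there, I would realize $X$ as a smooth $\mathbb{Z}_2^2$-cover $\varphi\colon X\to\mathbb{F}_{2n+k}$ built by fixing a configuration of lines on $\mathbb{P}^2$, blowing up a point to pass to $\mathbb{F}_1$, and applying Persson's $(2n+k)$-cyclic cover of Lemma~\ref{PerssonsTrick}. The building data I would target is
\begin{equation*}
\begin{gathered}
B_1\in|\mathcal{O}_{\mathbb{F}_{2n+k}}(\Delta_0)|,\qquad B_2\in|\mathcal{O}_{\mathbb{F}_{2n+k}}(\Delta_0+(2n+2k)F)|,\\
B_3\in|\mathcal{O}_{\mathbb{F}_{2n+k}}(3\Delta_0+(8n+m+3k+2)F)|,
\end{gathered}
\end{equation*}
together with
\begin{equation*}
\begin{gathered}
L_1=\mathcal{O}_{\mathbb{F}_{2n+k}}\!\left(2\Delta_0+\left(5n+\tfrac{m+5k}{2}+1\right)F\right),\qquad L_2=\mathcal{O}_{\mathbb{F}_{2n+k}}\!\left(2\Delta_0+\left(4n+\tfrac{m+3k}{2}+1\right)F\right),\\
L_3=\mathcal{O}_{\mathbb{F}_{2n+k}}\!\left(\Delta_0+(n+k)F\right).
\end{gathered}
\end{equation*}
Compared with Theorem~\ref{FirstFam}, the sole change in the classes is that $B_3$ (equivalently $L_1,L_2$) carries $2n$ extra fibers; the congruence $k\equiv m\ (2)$ makes the $F$-coefficients of $L_1,L_2$ integral, and $k\le m\le 2n+k$ is what should make enough cyclic-cover fibers available.

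One checks directly that these classes satisfy Catanese's conditions $2L_1\cong B_2+B_3$, $2L_2\cong B_1+B_3$ and $L_3\cong L_1+L_2-B_3$, so the data defines a $\mathbb{Z}_2^2$-cover. Then Proposition~\ref{Z22Formulas} gives $2K_X\cong\varphi^*\big(\mathcal{O}_{\mathbb{F}_{2n+k}}(\Delta_0+(6n+m+3k-2)F)\big)$, an ample class, so $K_X$ is ample and
\begin{equation*}
K_X^2=10n+2m+5k-4,\quad \chi(\mathcal{O}_X)=5n+m+2k+1,\quad q(X)=0,\quad p_g(X)=5n+m+2k,
\end{equation*}
whence $h^{1,1}(X)=40n+8m+15k+14$. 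In particular $X\in\mathfrak{M}_{K^2,\chi}$ is a canonical model as soon as its singularities are of type ADE.

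It then remains to fix the positions of the branch curves so that all singularities of $B_1+B_2+B_3$ are ADE and, reading the singularities of $X$ off them via Corollary~\ref{Z22Sing} and Remark~\ref{DoubleSing}, the resulting $A$- and $D$-invariants of $X$ sum to exactly $40n+8m+15k+12$. Indeed, applying Lemma~\ref{PicardSurjectiveMorphism} to $\varphi$ with $C_1,C_2$ the negative section and a fiber of $\mathbb{F}_{2n+k}$, and using $q(X)=0$, this sum being precisely $h^{1,1}(X)-2$ would force $\rho(\hat X)=h^{1,1}(X)$, i.e.\ maximal Picard number. Since the target exceeds the analogous sum of Theorem~\ref{FirstFam} by $16n$, the $2n$ extra fibers of $B_3$ must be threaded through nodes so as to produce additional $D_4$-singularities on the branch (most naturally $2n$ of them), which double to $4n$ extra $D_4$-singularities on $X$ by Remark~\ref{DoubleSing} and contribute the missing $4\cdot 4n=16n$.

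The hard part is exactly this last step. A naive reuse of the configuration of Theorem~\ref{FirstFam} runs out of nodes: the triple-point locus there supplies only $2n+k$ nodes, so threading $2n+m$ fibers through it would require $m\le k$, which is inconsistent with the full range $k\le m\le 2n+k$. One must therefore \emph{enrich} the line configuration on $\mathbb{P}^2$ so that the relevant locus carries enough nodes to absorb the extra fibers while leaving the right number of residual $A_1$-points, keeping $B_1+B_2+B_3$ reduced, the $L_i$ nontrivial, and every singularity ADE. I expect the inequalities $k\le m\le 2n+k$ to be precisely the combinatorial conditions under which these $D_4$-points, the $A_3$-singularities produced by Corollary~\ref{Z22Sing}\,iii), and the large $D$-singularities coming from the cyclic-cover branch fibers can be accommodated simultaneously; pinning down this configuration is the only genuinely new input, as the invariant computation and the Picard bound then transfer verbatim from Theorem~\ref{FirstFam}.
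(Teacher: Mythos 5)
There is a genuine gap, and it sits exactly where you locate it: the construction of the branch configuration. Your numerical skeleton is correct --- the classes of $B_1,B_2,B_3$ and $L_1,L_2,L_3$ you write down are precisely those of the paper, the invariant computation via Proposition~\ref{Z22Formulas} is right, and the target $\sum i\alpha_i+\sum j\beta_j+\sum k\gamma_k=40n+8m+15k+12=h^{1,1}(X)-2$ is the correct bookkeeping for Lemma~\ref{PicardSurjectiveMorphism}. But your guess for the \emph{mechanism} producing the extra $16n$ is wrong, and this is not a detail one can fill in along the lines you sketch. You propose to absorb the surplus by threading $2n$ additional fibers through nodes so as to create $4n$ extra $D_4$-points on $X$; in the paper's construction the number of $D_4$-singularities actually \emph{decreases} (there are $2m-2k$ of them, versus $2m$ in the first family), and no extra fibers beyond $m-k$ are added. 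The extra $2n$ in the $F$-coefficient of $B_3$ does not come from $2n$ extra fibers at all: it comes from replacing one of the lines $\overline{l}_i\in|\mathcal{O}_{\mathbb{F}_1}(\Delta_0+F)|$ by the strict transform of a \emph{smooth conic} $C$ tangent to $l_1,l_2,l_3$, whose pullback lies in $|\mathcal{O}_{\mathbb{F}_{2n+k}}(\Delta_0+2(2n+k)F)|$.

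The key idea you are missing is that tangency, not nodality, is what gets amplified profitably by Persson's cyclic cover. The tacnodes $Cl_1=2P_1$ and $Cl_2=2P_2$ are singularities of type $A_3$ of $C+l_i$ lying transversally on the branch fibers of $\psi$, so by Lemma~\ref{PerssonsTrick}\,c) they become singularities of type $A_{8n+4k-1}$ upstairs, hence of type $D_{8n+4k+2}$ after adding the fiber through them; the four resulting singularities of $X$ contribute $4(8n+4k+2)=32n+16k+8$ instead of the $16n+8k+8$ coming from the ordinary nodes of the first family. Together with the $2n+k$ singularities of type $A_3$ of $B_2+B_3$ at the third tangency point $P_3$ (yielding $2n+k$ nodes of $X$ via Corollary~\ref{Z22Sing}\,i)) and the loss of one of the two $D_{2n+k+2}$-points, this is exactly how the count closes. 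Without introducing the conic (or some equivalent source of deep $A$-singularities on the cyclic-cover branch fibers), a pure line arrangement with the branch classes you fixed cannot reach the required total, as you yourself observe when you note that the triple-point locus runs out of nodes; so the proposal as it stands does not prove the theorem.
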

\begin{proof}
	Let $ C \in|\mathcal{O}_{\mathbb{P}^2}(2)|$ be a smooth conic and $ l_1, l_2, l_3 \in|\mathcal{O}_{\mathbb{P}^2}(1)|$ be three lines such that (see \cite[Appendix A.A6]{MR0463157} for the notation):
	\begin{equation*}
		Cl_1 = 2P_1,\qquad Cl_2 = 2P_2,\qquad Cl_3 = 2P_3.
	\end{equation*}

	Let us also consider:    
	\begin{alignat*}{4}
		P_4  :=l_2 \cap l_3,\qquad  & P_5  :=l_1 \cap l_2,\qquad & l_4  :=\langle P_1,P_4\rangle,\qquad 
		& Q  := C \cap l_4\setminus \{P_1\}, \\
		l_5  :=\langle Q,P_5\rangle, \qquad & l_6  :=\langle Q,P_2\rangle, \qquad & P_6  :=l_3 \cap l_5, \qquad  & P_7  :=l_1 \cap l_6.
	\end{alignat*}
	
	\begin{center}

		\tikzset{every picture/.style={line width=0.75pt}} 
		
		\begin{tikzpicture}[x=0.75pt,y=0.75pt,yscale=-1,xscale=1]
			
			\draw  [color={rgb, 255:red, 74; green, 144; blue, 226 }  ,draw opacity=1 ] (272,93.5) .. controls (272,63.68) and (289.01,39.5) .. (310,39.5) .. controls (330.99,39.5) and (348,63.68) .. (348,93.5) .. controls (348,123.32) and (330.99,147.5) .. (310,147.5) .. controls (289.01,147.5) and (272,123.32) .. (272,93.5) -- cycle ;
			\draw [color={rgb, 255:red, 74; green, 144; blue, 226 }  ,draw opacity=1 ]   (219,13.5) -- (364,286.5) ;
			\draw [color={rgb, 255:red, 74; green, 144; blue, 226 }  ,draw opacity=1 ]   (394,21.5) -- (259,285.5) ;
			\draw [color={rgb, 255:red, 139; green, 87; blue, 42 }  ,draw opacity=1]   (190,35.5) -- (449,42.5) ;
			\draw [color={rgb, 255:red, 129; green, 237; blue, 12 }  ,draw opacity=1 ]   (218,175.5) -- (415,15.5) ;
			\draw [color={rgb, 255:red, 126; green, 211; blue, 33 }  ,draw opacity=1 ]   (349,9.5) -- (330,276.5) ;
			\draw [color={rgb, 255:red, 139; green, 87; blue, 42 }  ,draw opacity=1 ]   (367,1.5) -- (278,289.5) ;
			
			\draw (281,60) node [anchor=north west][inner sep=0.75pt]   [align=left] {$ C $};
			\draw (255,115) node [anchor=north west][inner sep=0.75pt]   [align=left] {$ P_1 $};
			\draw (345,118) node [anchor=north west][inner sep=0.75pt]   [align=left] {$ P_2 $};
			\draw (303,20) node [anchor=north west][inner sep=0.75pt]   [align=left] {$ P_3 $};
			\draw (383,40) node [anchor=north west][inner sep=0.75pt]   [align=left] {$ P_4 $};
			\draw (286,177) node [anchor=north west][inner sep=0.75pt]   [align=left] {$ P_5 $};
			\draw (360,23) node [anchor=north west][inner sep=0.75pt]   [align=left] {$ P_6 $};
			\draw (340,217) node [anchor=north west][inner sep=0.75pt]   [align=left] {$ P_7 $};
			\draw (235,62) node [anchor=north west][inner sep=0.75pt]   [align=left] {$ l_1 $};
			\draw (367,74) node [anchor=north west][inner sep=0.75pt]   [align=left] {$ l_2 $};
			\draw (261,20) node [anchor=north west][inner sep=0.75pt]   [align=left] {$ l_3 $};
			\draw (289,263) node [anchor=north west][inner sep=0.75pt]   [align=left] {$ l_5 $};
			\draw (210,154) node [anchor=north west][inner sep=0.75pt]   [align=left] {$ l_4 $};
			\draw (343,172) node [anchor=north west][inner sep=0.75pt]   [align=left] {$ l_6 $};
			\draw (325,61) node [anchor=north west][inner sep=0.75pt]   [align=left] {$ Q $};

		\end{tikzpicture}	    	
		
	\end{center}

	Let $\pi\colon \mathbb{F}_1 \to \mathbb{P}^2$ be the blow-up of $ \mathbb{P}^2 $ at $ Q $. We denote by:
	\begin{center}
		\begin{tabular}{l l}
			$ \overline{l}_1, \overline{l}_2, \overline{l}_3 $: & the pull-back of  $ l_1, l_2, l_3$, respectively,\\
			$ C' $, $ l'_4 $, $ l'_5, l'_6 $:&the strict transforms of $ C $, $ l_4 $, $ l_5 $, $ l_6 $, respectively.
		\end{tabular}	
	\end{center}

    Note that 
$\overline{l}_i\cong \mathcal{O}_{\mathbb{F}_1}(\Delta_0+F)$ for all $i\in\{1,2,3\}$, $l'_i\cong \mathcal{O}_{\mathbb{F}_1}(F)$ for all $i\in\{4,5,6\}$ and $C'\cong \mathcal{O}_{\mathbb{F}_1}(\Delta_0 + 2F)$.\\
    

	Let $\psi\colon \mathbb{F}_{2n+k}\to \mathbb{F}_1$ be the $ (2n+k) $-cyclic cover branched at $ l'_4 + l'_6 $ (see Lemma \ref{PerssonsTrick}).\\

    Firstly, since $ \overline{l}_1,\overline{l}_2,\overline{l}_3 \in |\mathcal{O}_{\mathbb{F}_1}(\Delta_0+F)|$  and $C' \in | \mathcal{O}_{\mathbb{F}_1}(\Delta_0 + 2F)|$ are effective divisors not having $l'_4 $ and $ l'_6$ as components, by Lemma \ref{PerssonsTrick} we obtain that    
    $$\psi^{*}\left( \overline{l}_i\right) \in | \mathcal{O}_{\mathbb{F}_{2n+k}}(\Delta_0 + (2n+k)F)| \text{ for all $i\in\{1,2,3\}$ and } \psi^{*}\left( C'\right) \in |\mathcal{O}_{\mathbb{F}_{2n+k}}(\Delta_0 + 2(2n+k)F)|.$$
    
    Set $\tilde{l}_i :=\psi^{*}\left( \overline{l}_i\right) \in | \mathcal{O}_{\mathbb{F}_{2n+k}}(\Delta_0 + (2n+k)F)|$ for all $i = 1,2,3$ and $\tilde{C} := \psi^{*}\left( C'\right) \in |\mathcal{O}_{\mathbb{F}_{2n+k}}(\Delta_0 + 2(2n+k)F)|$.\\

    Secondly, being $\psi$ branched at $ l'_4 + l'_6 $, we obtain that    
    \begin{equation*}
		\begin{split}			
			\psi^{*}\left( l'_4\right) & =(2n+k)\tilde{l}_4,\\
			\psi^{*}\left( l'_6\right) & =(2n+k)\tilde{l}_6,
		\end{split}
	\end{equation*}
    
    for some fibers $\tilde{l}_4, \tilde{l}_6 \in |\mathcal{O}_{\mathbb{F}_{2n+k}}(F)|$.\\

    Lastly, because the divisor $ l'_5 $ does not meet the branch locus $ l'_4 + l'_6$ of $\psi$, we get
    \begin{equation*}
		\begin{split}			
			\psi^{*}\left( l'_5\right) & 
			=\tilde{l}^{1}_5+\tilde{l}^{2}_5+\cdots + \tilde{l}^{2n+k}_5,
            \end{split}
	\end{equation*}
    
    for some fibers $\tilde{l}^{1}_5,\tilde{l}^{2}_5,\cdots, \tilde{l}^{2n+k}_5 \in |\mathcal{O}_{\mathbb{F}_{2n+k}}(F)|$.\\
    
	We can consider the following divisors on $ \mathbb{F}_{2n+k} $:
	\begin{equation*}
		\begin{split}
			& B_{1} := \Delta_0\\
			& B_{2}: =\begin{cases} 
				\tilde{l}_3 \text{ if } k=0, \\
				\tilde{l}_3+ \tilde{l}^{1}_5+ \cdots + \tilde{l}^{k}_5 \text{ otherwise.}\end{cases}
			\\
			& B_{3} : = \begin{cases} 
				\tilde{C} + \tilde{l}_1+ \tilde{l}_2 + 	\tilde{l}_4+ \tilde{l}_6 \text{ if } m=k, \\
				\tilde{C} + \tilde{l}_1+ \tilde{l}_2 + 	\tilde{l}_4+ \tilde{l}_6 + \tilde{l}^{k+1}_5+  \cdots + \tilde{l}^{m}_5  \text{ otherwise.}
			\end{cases}
		\end{split}
	\end{equation*}
	Note that $B_1\in|\mathcal{O}_{\mathbb{F}_{2n+k}}(\Delta_0)|,B_2\in |\mathcal{O}_{\mathbb{F}_{2n+k}}(\Delta_0 + \left( 2n+2k\right)F)|, B_3\in |\mathcal{O}_{\mathbb{F}_{2n+k}}(3\Delta_0 + \left( 8n+m+3k+2\right)F)|$.
	We can also consider the following line bundles on $ \mathbb{F}_{2n+k} $:
	\begin{equation*}
		\begin{split}
			& L_1:= 
   \mathcal{O}_{\mathbb{F}_{2n+k}}\left(2\Delta_0+\left(5n+\frac{m+5k}{2}+1\right)F\right), \\
			& L_2:=\mathcal{O}_{\mathbb{F}_{2n+k}}\left(2\Delta_0+\left(4n+\frac{m+3k}{2}+1\right)F\right),\\
			& L_3:= \mathcal{O}_{\mathbb{F}_{2n+k}}\left(\Delta_0+(n+k)F\right).
		\end{split}
	\end{equation*}

	The building data $\{L_i,B_j\}_{i,j}$ defines a $ \mathbb{Z}^2_2 $-cover $ \varphi\colon X \to \mathbb{F}_{2n+k}$. On the one hand, taking into account Lemma \ref{PerssonsTrick}, the singular set of $B_3$ consists of:
	\begin{itemize}
		\item One singularity of type $ D_{8n+4k+2} $ coming from $ P_1 $.
		\item One singularity of type $ D_{8n+4k+2} $ coming from $ P_2 $.
		\item One singularity of type $ A_1 $ coming from $ P_7 $.
		\item $ \left( 2n+k-m\right)  $ singularities of type $ A_1 $ coming from $ P_5 $.
		\item $ m-k $ singularities of type $ D_4 $ coming from $ P_5 $.
		\item $ m-k $ singularities of type $ A_1 $ coming from the intersection of $ \tilde{l}^{k+1}_5+ \cdots + \tilde{l}^{m}_5 $ and $ \tilde{C} $.
	\end{itemize}
	The singular set of $B_2$ consists of:
	\begin{itemize}
		\item $k$ singularities of type $ A_1 $ coming from $ P_6 $.
	\end{itemize}
	The singular set of $B_2+B_3$ restricted to $ B_2\cap B_3 $ consists of:
	\begin{itemize}
		\item one singularity of type $ D_{4n+2k+2} $ coming from $ P_4 $.
		\item $k$ singularities of type $ D_{4} $ coming from $ P_5 $.
		\item $ 2n+k $ singularities of type $ A_{3} $ coming from $ P_3 $.
	\end{itemize}
	Thus, the singular set of $X$ consists of:
	\begin{itemize}
		\item $ 4 $ singularities of type $ D_{8n+4k+2} $ by Remark \ref{DoubleSing}.
		\item $ 4n+2k+2 $ singularities of type $ A_1 $ by Remark \ref{DoubleSing}.
		\item  $ 2m-2k $ singularities of type $ D_4 $ by Remark \ref{DoubleSing}.
		\item One singularity of type $ D_{2n+k+2} $ by Corollary \ref{Z22Sing}, $ii)$.
		\item $k$ singularities of type $ A_3 $ by Corollary \ref{Z22Sing}, $iii)$.
		\item $ 2n+k $ singularities of type $ A_1 $ by Corollary \ref{Z22Sing}, $i)$.
	\end{itemize}
	
	On the other hand, by Proposition \ref{Z22Formulas} we have that $K_X$ is ample because $2K_{X}$ is the pull-back via $\varphi$ of the ample divisor $\mathcal{O}_{\mathbb{F}_{2n+k}}(\Delta_0 + \left( 6n+m+3k-2\right)F)$. Moreover:
	\begin{equation*}
		\begin{split}
			K_{X}^2 & = \left( \Delta_0 + \left( 6n+m+3k-2\right)F\right)^2=10n+2m+5k-4,\\
			p_g\left( X\right) & =5n+m+2k,\\
			\chi\left( \mathcal{O}_{X}\right) & =5n+m+2k+1,\\
			q\left( X\right) &  = 0,\\
			h^{1,1}\left( X\right) & = 40n+8m+15k+14.
		\end{split}
	\end{equation*}
	We conclude that $X\in\mathfrak{M}_{K^2,\chi}$ is a canonical model
	 whose minimal resolution has maximal Picard number  $   h^{1,1}\left( X\right) = 40n+8m+15k+14$ by Lemma \ref{PicardSurjectiveMorphism}.     
\end{proof}

The following result is an immediate consequence of Theorem \ref{SecondFam}. Although it is weaker than Theorem \ref{CorollaryFirstFam}, the result is included so that we can compare Theorem \ref{FirstFam} to Theorem \ref{SecondFam} (see Remark \ref{ComparingFamilies} below).

\begin{Theorem}\label{CorollarySecondFam}
	Given an integer $k\geq 0$ let  
	$(K^2, \chi)$ be an admissible pair such that $K^2=2\chi-6+k$. If $\chi\geq 3k+25$, then $\mathfrak{M}_{K^2,\chi}$ contains canonical models whose minimal resolution has maximal Picard number.
\end{Theorem}

\begin{proof}
	Let $n$ be the biggest integer such that  $\chi-(3k+1)\geq 5n$ and $\chi-(3k+1)\equiv n (2)$. Since $\chi-(3k+1)\geq 24$, it follows that $n\geq 4$. Moreover, there exists $b\in\{0,2,4,6,8\}$ such that $5n+b = \chi-(3k+1)$. If we set $m=k+b$, the result follows from Theorem \ref{SecondFam}.
\end{proof}

\begin{Remark} \label{ExtraCases2Fam}
Similarly to what happened for Theorem \ref{CorollaryFirstFam} and Theorem \ref{FirstFam}   (see Remark \ref{ExtraCases1Fam}),	the construction considered in the proof of Theorem \ref{SecondFam} works for every admissible pair $(K^2,\chi)$ satisfying the conditions stated on Theorem \ref{CorollarySecondFam}, but it also works in some other cases. For instance:
	\begin{itemize}
		\item When $K^2=2\chi-6$, this construction also produces canonical models $X$  in $\mathfrak{M}_{K^2,\chi}$  whose minimal resolution has maximal Picard number for the following pairs:
		\begin{center}
			\begin{tabular}{ |c||c|c|c|c|c|c|c|c|c|c|c|} 
				\hline
				$K^2$ & $6$ & $10$ &$16$&$20$&$24$& $26$&$30$&$34$&$36$&$38$&$40$ \\ 
				\hline 
				$\chi$ & $6$ & $8$ &$11$&$13$&$15$&$16$&$18$&$20$&$21$&$22$&$23$\\ 
				\hline
			\end{tabular}
		\end{center}
		\item When $K^2=2\chi-5$, this construction also produces canonical models $X$  in $\mathfrak{M}_{K^2,\chi}$  whose minimal resolution has maximal Picard number for the following pairs:
		\begin{center}
			\begin{tabular}{ |c||c|c|c|c|c|c|c|c|c|c|c|c| } 
				\hline
				$K^2$ & $3$ & $13$& $17$& $23$& $27$& $31$ &$33$&$37$&$41$&$43$&$45$&$47$\\ 
				\hline 
				$\chi$ & $4$ & $9$& $11$& $14$& $16$& $18$&$19$&$21$&$23$&$24$&$25$&$26$\\ 
				\hline
			\end{tabular}
		\end{center}
	\end{itemize}     
\end{Remark}

\begin{Remark}\label{ComparingFamilies}
	Taking a look at Theorem \ref{CorollaryFirstFam} and Theorem \ref{CorollarySecondFam} one realizes that the family constructed in Theorem \ref{FirstFam} is in some sense broader than the family constructed in Theorem \ref{SecondFam}. Actually, Theorem \ref{SecondFam} is not needed to prove neither Theorem \ref{CorollaryFirstFam} nor Theorem \ref{EvenHorikawaMaxPicard}. 
	Moreover, it only contributes to Theorem \ref{OddHorikawaMaxPicard} with the case $K^2=17, \chi =11$ (see Remark \ref{ExtraCases2Fam}). 
\end{Remark}


	\section{Horikawa surfaces with maximal Picard number}\label{HorikawaSurfaces}

The aim of this section is to prove Theorem \ref{EvenHorikawaMaxPicard} and Theorem \ref{OddHorikawaMaxPicard}, i.e. our results regarding  even Horikawa surfaces with maximal Picard number and odd  Horikawa surfaces with maximal Picard number respectively.
Theorem \ref{OddHorikawaMaxPicard} will require a few lemmas, but we are already able to prove Theorem \ref{EvenHorikawaMaxPicard}.

\begin{proof}[Proof of Theorem \ref{EvenHorikawaMaxPicard}]
	Let $(K^2, \chi)$ be an admissible pair such that $K^2=2\chi-6$ and $\chi\not\equiv 0 \text{ mod } 6$. Then all the connected components of  $\mathfrak{M}_{K^2, \chi}$ contain canonical models  whose minimal resolution has maximal Picard number by \cite[Theorem 1]{MR661198}. If $(K^2, \chi)$ is an admissible pair such that $K^2=2\chi-6$ and $\chi\equiv 0 \text{ mod } 6$, then $\mathfrak{M}_{K^2, \chi}$ has a unique connected component by \cite[Theorem 3.3]{MR0424831}. Therefore, it suffices to find one example for each pair of invariants satisfying the latter conditions. The case $\chi=6$ follows from \cite[Section 2, 3)]{MR661198} or Remark \ref{ExtraCases1Fam}. The case $\chi\geq 12$ follows from Theorem \ref{CorollaryFirstFam}. 
\end{proof}

In order to prove Theorem \ref{OddHorikawaMaxPicard} we will need the following:

\begin{Lemma}\label{MP13}
	$\mathfrak{M}_{1,3}$ contains canonical models  whose minimal resolution has maximal Picard number.
\end{Lemma}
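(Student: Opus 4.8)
The plan is to exhibit a single explicit member of $\mathfrak{M}_{1,3}$ as a $\mathbb{Z}_2^2$-cover of the plane, in the spirit of Theorem \ref{FirstFam}, and to control its Picard number with Lemma \ref{PicardSurjectiveMorphism}. Since $K^2=2\chi-5$ with $\chi=3$ lies outside the numerical range of Theorem \ref{FirstFam} and Theorem \ref{SecondFam}, the building data must be chosen by hand. Working over $Y=\mathbb{P}^2$ and writing $L_i\cong\mathcal{O}_{\mathbb{P}^2}(a_i)$, $B_j\cong\mathcal{O}_{\mathbb{P}^2}(b_j)$, the relations $2L_1\cong B_2+B_3$, $2L_2\cong B_1+B_3$, $2L_3\cong B_1+B_2$ together with Proposition \ref{Z22Formulas} force $\sum_j b_j-6=\pm1$ and $\sum_i a_i(a_i-3)=-2$; up to relabeling this singles out $(a_1,a_2,a_3)=(3,3,1)$ and $(b_1,b_2,b_3)=(1,1,5)$. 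First I would therefore take $B_1,B_2$ to be lines, $B_3$ a reduced quintic, and $L_1\cong L_2\cong\mathcal{O}_{\mathbb{P}^2}(3)$, $L_3\cong\mathcal{O}_{\mathbb{P}^2}(1)$, and check with Proposition \ref{Z22Formulas} that the resulting cover $\varphi\colon X\to\mathbb{P}^2$ satisfies $K_X^2=1$, $\chi(\mathcal{O}_X)=3$, $p_g(X)=2$, $q(X)=0$, and that $K_X$ is ample because $2K_X\cong\varphi^*\mathcal{O}_{\mathbb{P}^2}(1)$; hence $X$ is a canonical model in $\mathfrak{M}_{1,3}$ as soon as its singularities are of type ADE.

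Next I would translate maximal Picard number into a combinatorial target. Using the preservation of the invariants under the crepant minimal resolution together with $h^{1,1}(\hat{X})=10\chi(\mathcal{O}_{\hat{X}})-K_{\hat{X}}^2-2q(\hat{X})=29$ and Lemma \ref{PicardSurjectiveMorphism} with $n=1$ and $C_1$ a line, it suffices to arrange the branch locus $B_1+B_2+B_3$ so that the ADE singularities of $X$ contribute exactly $28$ to $\sum_i i\,\alpha_i+\sum_j j\,\beta_j+\sum_k k\,\gamma_k$; this then forces $\rho(\hat{X})=h^{1,1}(\hat{X})=29$ because $\rho\le h^{1,1}$. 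The singularities are produced and counted through the local dictionary of Corollary \ref{Z22Sing} and Remark \ref{DoubleSing}: a node of the quintic disjoint from $B_1\cup B_2$ doubles to two $A_1$ points (contribution $2$), whereas routing one of the lines transversally through such a node turns it into an ordinary triple point of the branch locus and hence, by Corollary \ref{Z22Sing}, into an $A_3$ point of $X$ (contribution $3$), and deeper contacts raise the type further. Taking $B_3$ to be a suitable reduced union of lines or a conic plus lines in sufficiently special position, and prescribing how $B_1,B_2$ meet its singular points, I would tabulate the induced singularities of $X$ from Corollary \ref{Z22Sing} and Remark \ref{DoubleSing} and verify that the total reaches $28$.

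The main obstacle is precisely this configuration problem: one must choose the branch curves so that the branch divisor stays reduced and the cover acquires only ADE singularities — which I would confirm case by case from the local equations of Remark \ref{LocalCoord} exactly as in Corollary \ref{Z22Sing} — while simultaneously pushing the total contribution up to the full $28$. Heuristically this is the tight, extremal case, since the $28$ resolution curves span a rank-$28$ negative definite sublattice which, together with $\varphi^*C_1$, accounts for the entire signature $(1,28)$ of $NS(\hat{X})$. A robust alternative, matching the blueprint of Section \ref{Proof of the main results}, is to blow up a point of $\mathbb{P}^2$ to pass to $\mathbb{F}_1$, apply Lemma \ref{PerssonsTrick} with a small cyclic degree $d$ to manufacture large $A_{2dn-1}$ singularities, and then add fibers converting them into the $D_{m+3}$ singularities that contribute most efficiently to the count; this systematically inflates the Milnor number while keeping $(K^2,\chi)$ fixed at $(1,3)$ and only requires reaching the target $27=29-2$ over $\mathbb{F}_d$ with $n=2$. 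Once the contribution is attained, Lemma \ref{PicardSurjectiveMorphism} yields $\rho(\hat{X})=h^{1,1}(\hat{X})$, i.e. the minimal resolution of $X\in\mathfrak{M}_{1,3}$ has maximal Picard number, completing the proof.
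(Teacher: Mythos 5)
Your framework is exactly the paper's: a $\mathbb{Z}_2^2$-cover $\varphi\colon X\to\mathbb{P}^2$ with branch degrees $(1,1,5)$ and $L$-degrees $(3,3,1)$, invariants $K_X^2=1$, $\chi(\mathcal{O}_X)=3$, $q=0$, $2K_X\cong\varphi^*\mathcal{O}_{\mathbb{P}^2}(1)$ ample, and Lemma \ref{PicardSurjectiveMorphism} with a line, so that maximality reduces to making the ADE singularities of $X$ contribute exactly $28=h^{1,1}-1$. Your numerical derivation of the building data is correct, and so is your local dictionary (a node of the quintic away from the lines doubles to two $A_1$'s; a line through such a node gives $D_3=A_3$ on $X$ by Corollary \ref{Z22Sing}, ii) with $n=0$).

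However, there is a genuine gap: the proof stops exactly where the mathematical content begins. Everything after the reduction is conditional --- ``I would tabulate the induced singularities \dots and verify that the total reaches $28$'' --- and no configuration achieving $28$ is actually exhibited or checked. Reaching $28$ is the extremal, delicate case (as you yourself observe, the resolution curves plus $\varphi^*C_1$ must fill the whole of $NS(\hat{X})$), so existence cannot be waved through genericity; it requires a concrete branch divisor together with a point-by-point verification, via Remark \ref{LocalCoord}, Corollary \ref{Z22Sing} and Remark \ref{DoubleSing}, that the cover has only ADE singularities and that the weighted contributions sum to $28$. The paper does this with a quintic $B$ that is a cuspidal cubic $C=(X_1^3-X_0X_2^2=0)$ together with its tangent at the cusp and its tangent at an inflection point: this $B$ has one $E_7$, one $A_5$ and one $A_1$, which double on $X$ to contribute $2(7+5+1)=26$, and the two remaining branch lines are taken tangent to $C$ at general points, each tangency giving one $A_1$ on $X$ by Corollary \ref{Z22Sing}, i), for a total of $26+2=28$. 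Your candidate quintics (five lines, or a conic plus lines) can in fact plausibly be completed to such a configuration --- e.g.\ five lines with two triple points and the two extra branch lines each drawn through two of the four remaining nodes gives $2\cdot(2\cdot 4)+4\cdot 3=28$ --- but none of this appears in your argument, and without it the lemma is not proved. The fallback via $\mathbb{F}_d$ and Lemma \ref{PerssonsTrick} has the same status: it is a plan whose crucial existence statement (a branch configuration on $\mathbb{F}_d$ with invariants $(1,3)$ and contribution $27$) is again left unverified.
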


\begin{proof}
	
	Let $C$ be the cuspidal cubic $(X_1^3-X_0X_2^2=0)$. Then $(X_2=0)$ is the tangent of $C$ at the cusp $(1:0:0)$ and $(X_0=0)$ is the tangent of $C$ at the inflection point $(0:0:1)$.
	We consider  a $\mathbb{Z}_2^2$-cover $\pi\colon X\to \mathbb{P}^2$  of $\mathbb{P}^2=\text{Proj}(\mathbb{C}[X_0,X_1,X_2])$ with building data $\{L_i,B_j\}_{i,j}$ where:
	$$L_1=\mathcal{O}_{\mathbb{P}^2}(1),\qquad L_2=L_3=\mathcal{O}_{\mathbb{P}^2}(3)$$ 
	and:
	\begin{equation*}
		\begin{split}
			B_1 & = \Bigl(X_0X_2(X_1^3-X_0X_2^2)=0\Bigr),\\
			B_2 & \text{ is the tangent of $C$ at a general point $P_2$},\\
			B_3 & \text{ is the tangent of $C$ at a general point $P_3\neq P_2$.}\\
		\end{split}
	\end{equation*}
	Note that $B_1\in |\mathcal{O}_{\mathbb{P}^2}(5)|,B_2,B_3\in |\mathcal{O}_{\mathbb{P}^2}(1)|$.
	On the one hand,  the singular set of $B_1$   
	consists of:
	\begin{itemize}
		\item One singularity of type $E_7$ at $(1:0:0)$.
		\item One singularity of type $A_1$ at $(0:1:0)$.
		\item One singularity of type $A_5$ at $(0:0:1)$.
	\end{itemize}
	The singular set of $B_1+B_i, i\in\{2,3\}$ restricted to $B_1\cap B_i$ consists of:
	\begin{itemize}
		\item One singularity of type $A_3$ over $P_i$.
	\end{itemize}
	Thus, the singular set of $X$ consists of:
	\begin{itemize}
		\item Two singularities of type $E_7$ over $(1:0:0)$ by Remark \ref{DoubleSing}.
		\item Two singularities of type $A_1$ over $(0:1:0)$ by Remark \ref{DoubleSing}.
		\item Two singularities of type $A_5$ over $(0:0:1)$ by Remark \ref{DoubleSing}.
		\item One singularity of type $A_1$ over $P_i, i\in\{2,3\}$ by Corollary \ref{Z22Sing},$i)$.
	\end{itemize}
	
	On the other hand, by Proposition \ref{Z22Formulas} we have that $K_X$ is ample because $2K_X$ is the pull-back via $\pi$ of the ample line bundle $\mathcal{O}_{\mathbb{P}^2}(1)$. Moreover:
	\begin{equation*}
		K_X^2 =1,\qquad
		\chi(\mathcal{O}_X) =3,\qquad
		p_g(X)  =2,\qquad q(X)=0,\qquad h^{1,1}(X)=29.
	\end{equation*}
	
	We conclude that $X\in \mathfrak{M}_{1,3}$ is a canonical model whose minimal resolution has maximal Picard number $h^{1,1}(X)=29$ by Lemma \ref{PicardSurjectiveMorphism}.
\end{proof}

\begin{Lemma}\label{MP55}
	$\mathfrak{M}_{5,5}$ contains canonical models whose minimal resolution has maximal Picard number.
\end{Lemma}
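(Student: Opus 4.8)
The plan is to realize $X$ as a $\mathbb{Z}_2^2$-cover of a Hirzebruch surface and then read off its Picard number from Lemma \ref{PicardSurjectiveMorphism}, exactly as in the proof of Lemma \ref{MP13}. The base cannot be $\mathbb{P}^2$: for a $\mathbb{Z}_2^2$-cover of $\mathbb{P}^2$ Proposition \ref{Z22Formulas} gives $K_X^2=(\deg(B_1+B_2+B_3)-6)^2$, a perfect square, whereas here $K_X^2=5$. Since $\chi(\mathcal{O}_X)=5$ and these covers of rational surfaces have $q(X)=0$, we are looking for a surface with $p_g(X)=4$ and $h^{1,1}(X)=10\cdot 5-5-0=45$; the goal is therefore to produce a model whose minimal resolution carries $45$ independent classes.

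First I would fix the numerical type of the building data on $Y=\mathbb{F}_1$, the smallest odd Hirzebruch surface and the degenerate $d=1$ instance of the recipe of Section \ref{Proof of the main results}. A direct search using Proposition \ref{Z22Formulas} singles out
$$B_1\cong 3\Delta_0+7F,\quad B_2\cong B_3\cong \Delta_0+F,\quad L_1\cong \Delta_0+F,\quad L_2\cong L_3\cong 2\Delta_0+4F,$$
which satisfy $2L_1\cong B_2+B_3$, $2L_2\cong B_1+B_3$ and $L_3\cong L_1+L_2-B_3$, and for which Proposition \ref{Z22Formulas} yields $K_X^2=(\Delta_0+3F)^2=5$, $\chi(\mathcal{O}_X)=5$, $p_g(X)=4$ and $q(X)=0$, so that $h^{1,1}(X)=45$.

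Next comes the crux: choosing actual curves in these systems so that the cover acquires rational double points of total rank $43$. Then $\rho(\hat X)\ge 43+2=45$ follows from Lemma \ref{PicardSurjectiveMorphism} with $n=2$ (taking $C_1=\Delta_0$ and $C_2=F$, which are numerically independent on $\mathbb{F}_1$), and since $\rho(\hat X)\le h^{1,1}(X)=45$ this forces maximality. I would take $B_2,B_3$ to be smooth sections and concentrate the singularities on the reducible curve $B_1\cong 3\Delta_0+7F$, whose arithmetic genus is $9$: placing its singular points off $B_2\cup B_3$ makes each of them contribute twice to $X$ by Remark \ref{DoubleSing}, while arranging $B_2$ and $B_3$ tangentially to smooth branches of $B_1$ produces the remaining $A$-type points through Corollary \ref{Z22Sing}, i). Concretely I expect to assemble $B_1$ from a cuspidal cubic together with its cuspidal and inflectional tangents, reproducing the $E_7$ and $A_5$ singularities of Lemma \ref{MP13}, supplemented by enough further tangent sections and fibers to push the doubled rank up to $43$.

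The hard part will be precisely this last step: exhibiting, inside the fixed systems $|3\Delta_0+7F|$ and $|\Delta_0+F|$ and keeping $B_1+B_2+B_3$ reduced, a configuration whose resolution spans a rank-$43$ lattice of $(-2)$-curves. This is the extremal singularity bound $43=h^{1,1}(X)-2$, the analogue of the bound $28=h^{1,1}-1$ attained in Lemma \ref{MP13}, so the prescribed contact orders leave no slack; checking that they are simultaneously realizable by genuine members of the two linear systems is the only delicate point, the invariants and the Picard estimate being then immediate from Proposition \ref{Z22Formulas} and Lemma \ref{PicardSurjectiveMorphism}.
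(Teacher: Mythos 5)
Your proposal diverges completely from the paper's proof, which is a one-line citation: a smooth quintic surface in $\mathbb{P}^3$ is a canonical model with $K^2=5$, $\chi=5$, $p_g=4$, $q=0$, hence $h^{1,1}=45$, and \cite[Theorem 1]{MR2855811} (Sch\"utt) exhibits quintics with Picard number $45$. No cover construction is needed at all for this pair of invariants.

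Your alternative route has a genuine gap at exactly the point you flag as ``the hard part''. The numerology is correct: on $\mathbb{F}_1$ the building data $B_1\cong 3\Delta_0+7F$, $B_2\cong B_3\cong\Delta_0+F$, $L_1\cong\Delta_0+F$, $L_2\cong L_3\cong 2\Delta_0+4F$ does satisfy the cover relations and does give $K_X^2=5$, $\chi=5$, $p_g=4$, $q=0$, $h^{1,1}=45$ via Proposition \ref{Z22Formulas}. But the statement to be proved is precisely the existence of actual reduced members of these linear systems whose singularities lift to a rank-$43$ configuration of $(-2)$-curves, and this is never exhibited; ``I expect to assemble $B_1$ from a cuspidal cubic together with its tangents, supplemented by enough further tangent sections and fibers'' is a plan, not a construction. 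This is not a routine verification one can wave at: the bound $43=h^{1,1}-2$ is extremal (as you note, $B_1$ has arithmetic genus $9$, so the total $\delta$-invariant available for doubled singularities is tightly constrained, and one must check that the required cusps, higher tangencies and transversality conditions are \emph{simultaneously} realizable by genuine curves in $|3\Delta_0+7F|$ and $|\Delta_0+F|$ while keeping $B_1+B_2+B_3$ reduced and the tangency points off $B_2\cap B_3$). Whether such an extremal configuration exists is the entire content of results of this type --- it is why Persson's and Sch\"utt's papers are nontrivial --- so without an explicit configuration and a count of the resulting $A$--$D$--$E$ ranks summing to $43$, the lemma is not proved. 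As written, the proposal establishes only that \emph{if} such a configuration exists then Lemma \ref{PicardSurjectiveMorphism} closes the argument.
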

\begin{proof}
	The result follows from \cite[Theorem 1]{MR2855811}.
\end{proof}

\begin{Definition}\label{El1Singularities}
	Let $r\colon S\to X$ be the minimal resolution of a normal surface singularity $(X,p)$
	whose exceptional divisor $r^{-1}(p)$ is an elliptic curve $C$  with self-intersection $(-1)$.
	Then  $p$ is said to be a singularity of type $El(1)$. 
\end{Definition}

\begin{Lemma}\label{MP76}
	$\mathfrak{M}_{7,6}$ contains canonical models  whose minimal resolution has maximal Picard number.
\end{Lemma}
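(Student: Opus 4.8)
The plan is to realize a canonical model of $\mathfrak{M}_{7,6}$ as a $(\mathbb{Z}_2^2$- or ordinary$)$ cover $\pi\colon X\to Y$ of a rational surface $Y$, following the template of Lemma \ref{MP13}, but now allowing the cover to acquire a \emph{simple elliptic} singularity of type $El(1)$ in addition to a large collection of ADE singularities. The freshly introduced Definition \ref{El1Singularities} is the signal that this non-canonical singularity is the key new ingredient. Concretely, I would look for a branch configuration producing, locally around one distinguished branch point, the equation $z^2=x^3+y^6$ (the $J_{10}$ plane-curve singularity); this is exactly the type of local computation carried out in Corollary \ref{Z22Sing}, but pushed one step beyond the $A$- and $D$-cases treated there. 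Its minimal resolution replaces the singular point by a smooth elliptic curve $C$ with $C^2=-1$, so the singularity is of type $El(1)$, and the discrepancy relation $K_{\hat X}=r^*K_X-C$ records that the resolution is not crepant.

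The invariants would then be pinned down by computing those of the normal cover $X$ and correcting for this single non-crepant resolution. Taking, for instance, a double cover of $\mathbb{P}^2$ branched along a curve $B\in|\mathcal{O}_{\mathbb{P}^2}(10)|$ gives $K_X^2=2(5-3)^2=8$ and $\chi(\mathcal{O}_X)=2+\tfrac12\cdot 5\cdot 2=7$; resolving one $El(1)$ point drops each of these by exactly one, since $C^2=-1$ lowers $K^2$ by the discrepancy computation above, while the geometric genus $1$ of the simple elliptic singularity accounts for $\chi(\mathcal{O}_X)-\chi(\mathcal{O}_{\hat X})=1$. This yields $K_{\hat X}^2=7$ and $\chi(\mathcal{O}_{\hat X})=6$; checking $q(\hat X)=0$ then gives $h^{1,1}(\hat X)=10\cdot 6-7=53$. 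One must also verify that contracting the $(-2)$-curves produces an RDP surface with ample canonical class, so that it genuinely lies in $\mathfrak{M}_{7,6}$. (A variant over a Hirzebruch surface $\mathbb{F}_e$, closer to the machinery of Proposition \ref{Z22Formulas} and Corollary \ref{Z22Sing}, should also be available; the point is insensitive to the exact base.)

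For the Picard number I would adapt Lemma \ref{PicardSurjectiveMorphism}, which applies verbatim only to the ADE locus. The ADE singularities of the cover contribute $(-2)$-curves of total rank $R$, the pullback of a line (or of $\Delta_0$ and $F$) contributes the independent classes furnished by the lemma, and, crucially, the elliptic curve $C$ resolving the $El(1)$ point contributes one further class. Since $C$ is disjoint from the $(-2)$-configurations and has $C^2=-1$, adjoining it to the bunch of curves in Remark \ref{RankBunch} raises the rank of the intersection matrix by one beyond what Lemma \ref{PicardSurjectiveMorphism} alone provides, so that $\rho(\hat X)\geq R+1+1$. Arranging the branch configuration so that $R=51$ then forces $\rho(\hat X)\geq 53=h^{1,1}(\hat X)$, and equality follows.

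The main obstacle is the explicit branch configuration. One must exhibit a single divisor carrying one $El(1)$-producing point together with transverse and tangential ADE points of total rank exactly $51$, all while keeping the cover normal, $q=0$, and $K$ ample after contraction. The two delicate verifications are that the special local singularity is genuinely $\tilde E_8$ (equivalently $J_{10}$, $x^3+y^6$) and not a nearby ADE or a worse non-log-canonical point, and that the elliptic curve $C$ is numerically independent of the $(-2)$-curves and of the pulled-back classes; these are exactly the places where the argument departs from the purely ADE computations of the previous sections and where most of the work will lie.
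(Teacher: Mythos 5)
Your overall strategy is exactly the paper's: force a single simple elliptic singularity of type $El(1)$ on an otherwise ADE-singular cover of a rational surface so that the invariants drop from $(K^2,\chi)=(8,7)$ to $(7,6)$, use the discrepancy relation $K_{\hat X}=r^*K_X-C$ with $C^2=-1$ and the length-one $R^1r_*\mathcal{O}_{\hat X}$ to get $K^2_{\hat X}=7$, $\chi(\mathcal{O}_{\hat X})=6$, $h^{1,1}(\hat X)=53$, and then let the exceptional elliptic curve supply the extra independent class beyond what Lemma \ref{PicardSurjectiveMorphism} provides. That numerology is all correct, as is the identification of the local model $z^2=x^3+y^6$ producing an $El(1)$ point.

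The gap is that essentially the whole proof is the step you defer. You never exhibit a branch divisor carrying one $El(1)$-producing point together with ADE singularities of total rank $51$; for your suggested realization (a double cover of $\mathbb{P}^2$ branched in a decimic) this amounts to a reduced degree-$10$ curve with one $J_{10}$ point and ADE points of total Milnor number $51$, which is near the extremal range for decimics and whose existence is not clear a priori. Nor do you carry out the minimality/ampleness verification, which the paper does not treat as routine. The paper's proof consists precisely of this missing construction: it takes Persson's tri-conical configuration $C_0,C_1,C_2$ together with a common tangent line of $C_1$ and $C_2$, blows up the tangency point $Q$ of $C_1$ and $C_2$ to pass to $\mathbb{F}_1$, and builds a bidouble (not double) cover whose branch locus contains $\Delta_0$; the $El(1)$ point arises at $B_2\cap\Delta_0$, the ADE locus (two $A_{15}$, four $A_3$, eight $A_1$) contributes rank $50$, the exceptional locus over the elliptic point is in fact a rank-$3$ configuration $N,E_1,E_2$ (one $(-2)$-curve and two elliptic $(-1)$-curves) rather than a single elliptic curve, giving $50+3=53$, and minimality is established by a nefness computation on the blown-up base. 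Until you produce an explicit configuration with these properties and check ampleness of the canonical class of its canonical model, the lemma is not proved.
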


\begin{proof}
	Let $C_0,C_1,C_2$ be three conics forming Persson's tri-conical configuration \cite[Proposition 2.1]{MR661198}. In particular (see \cite[Appendix A.A6]{MR0463157} for the notation):
	\begin{equation*}
		C_0C_1= 4P_1,\quad C_0C_2= 4P_2,\quad C_1C_2=2Q+R_1+R_2.
	\end{equation*}
	
	Let $l$ be a line tangent to both $C_1$ at $T_1$ and $C_2$ at $T_2$ and let us consider the blow-up $\pi\colon\mathbb{F}_1\to \mathbb{P}^2$ of $\mathbb{P}^2$ at $Q$. Let us denote by $\Delta_0$ the exceptional divisor of $\pi$ and by $F$ the class of a fiber of $\mathbb{F}_1$. The strict transform $C$  of $C_0+C_1+C_2+l$ via $\pi$ is a curve in $|5\Delta_0+7F|$. Let us consider a $\mathbb{Z}_2^2$-cover $\phi\colon Y\to \mathbb{F}_1$ with building data $\{L_i,B_j\}_{i,j}$ where:
	\begin{itemize}
		\item  $B_1=\Delta_0+C \in |6\Delta_0+7F|$.
		\item $B_2$  is the fiber passing through the point of intersection of the strict transforms of $C_1$ and $C_2$  over $\Delta_0$.
		\item $ B_3$  is the fiber passing through $\pi^{-1}(P_1)$ and $\pi^{-1}(P_2)$.
		\item $L_1=L_2=3\Delta_0+4F$.
		\item $L_3=F$.
	\end{itemize}
	On the one hand, the singular set of $Y$ consists of:
	\begin{itemize}
		\item Two singularities of type $A_{15}$ coming from $P_1,P_2$ by Corollary \ref{Z22Sing},$iii)$.
		\item Four singularities of type $A_3$ coming from $T_1,T_2$ by Remark \ref{DoubleSing}.
		\item Four singularities of type $A_1$ coming from $R_1,R_2$ by Remark \ref{DoubleSing}.
		\item Four singularities of type $A_1$  coming from the intersection of $l$ with $C_0$ by Remark \ref{DoubleSing}.
		\item One singularity $P$ of type $El(1)$ coming from $B_2\cap \Delta_0$ (see Definition \ref{El1Singularities}).
	\end{itemize}
	
	On the other hand, the surface $Y$ has ample canonical class because $2K_Y$ is the pull-back via $\phi$ of the ample divisor $2\Delta_0+3F$. In addition, $K^2_Y=8,\chi(\mathcal{O}_Y)=7, p_g(Y)=6, q(Y)=0$.\\
	
	If we denote by $b\colon \widetilde{\mathbb{F}}_1\to \mathbb{F}_1$  the blow-up of $\mathbb{F}_1$ at $B_2\cap \Delta_0$ with exceptional divisor $E$, there is 
	a $\mathbb{Z}_2^2$-cover $\psi\colon \widetilde{X}\to\widetilde{\mathbb{F}}_1$ with branch locus $\widetilde{B}=\widetilde{B}_1+\widetilde{B}_2+
	\widetilde{B}_3$ where 
	\begin{equation*}
		\begin{split}
			\widetilde{B}_1 & =b^*B_1-3E,\\
			\widetilde{B}_2 & =b^*B_2-E,\\
			\widetilde{B}_3 & =b^*B_3+E.
		\end{split}
	\end{equation*}
	Furthermore, the minimal resolution of $P$ is precisely the map $r\colon \widetilde{X}\to Y$ making the following diagram commutative:
	\[
	\begin{tikzcd}
		\widetilde{X} \arrow{r}{r}\arrow{d}[swap]{\psi} & Y \arrow{d}{\phi} \\
		\widetilde{\mathbb{F}}_1 \arrow{r}[swap]{b}& \mathbb{F}_1
	\end{tikzcd}
	\]

    The surface $\widetilde{X}$ is of general type,  all its singularities are  $ADE$ singularities and  $K^2_{\widetilde{X}}=7, \chi(\mathcal{O}_{\widetilde{X}})=6, p_g(\widetilde{X})=5, q(\widetilde{X})=0$. Having said that, let us denote by $s\colon X\to \widetilde{X}$ the minimal resolution of $\widetilde{X}$. Then $X$ has fifty $(-2)$-curves induced by the ADE singularities of $\widetilde{X}$, but it has one more $(-2)$-curve $N$ coming from $\Delta_0$. In addition, the fiber of $X$ induced by $B_2$ consists of $2$ smooth elliptic curves $E_1,E_2$ with self-intersection $(-1)$ intersecting transversally in a single point. The intersection matrix of $N,E_1,E_2$ is:
	\begin{equation*}
		M= \left[ {\begin{array}{ccc}
				-2 & 1 & 0\\
				1 & -1 & 1\\
				0 & 1 & -1\\
		\end{array} } \right]
	\end{equation*}
	Since $N,E_1,E_2$ are disjoint from the $(-2)$-curves induced by the ADE singularities of $\widetilde{X}$ and $\rank(M)=3$, we conclude that:
	\begin{equation*}
		\rho(X)=h^{1,1}(X)=53. \end{equation*}
	
	In particular, if $X$ were minimal, it would have maximal Picard number and  it would be the minimal resolution of its canonical model, which would belong to $\mathfrak{M}_{7,6}$.  Since $2K_{X}=s^* \psi^*
	\left( b^*(2\Delta_0+3F)-E \right)$, the minimality of $X$ will follow if we show that $b^*(2\Delta_0+3F)-E$ is nef. Let us suppose  there exist non-negative integers $\alpha,\beta,\gamma$ and an irreducible and reduced curve $D\in|b^*(\alpha\Delta_0+\beta F)-\gamma E|$ such that
	$$0>D\cdot \left( b^*(2\Delta_0+3F)-E \right)=\alpha+2\beta-\gamma.$$
	This means that $b(D)$ is an irreducible and reduced  curve on $|\alpha\Delta_0+\beta F|$ that has a point of multiplicity $\gamma>\alpha+2\beta$ at $B_2\cap \Delta_0$, which is clearly impossible. Thus, $b^*(2\Delta_0+3F)-E$ is nef and the result follows. 
\end{proof}

\begin{proof}[Proof of Theorem \ref{OddHorikawaMaxPicard}]
	The result is an immediate consequence of Theorem \ref{CorollaryFirstFam},  Lemma \ref{MP13}, Lemma \ref{MP55}, Lemma \ref{MP76}, Remark \ref{ExtraCases1Fam} and Remark \ref{ExtraCases2Fam}.
\end{proof}

\begin{Remark}
	As an attentive reader may have noticed, the paper leaves several open questions. The most evident are the following:
	\begin{itemize}
		\item Does $\mathfrak{M}_{11,8}$ contain canonical models  whose minimal resolution has maximal Picard number?
		\item Let $(K^2, \chi)$ be an admissible pair such that $K^2=2\chi-5$ and $K^2+1\in 8\cdot \mathbb{Z}$ or $(K^2,\chi)=(9,7)$. Then $\mathfrak{M}_{K^2,\chi}$ has two connected components by the results of \cite{MR0460340}.  Do all the connected components of $\mathfrak{M}_{K^2,\chi}$ contain canonical models  whose minimal resolution has maximal Picard number?		
	\end{itemize}
\end{Remark}

\begin{Acknowledgments}
The authors are grateful to Professor Margarida Mendes Lopes and Professor Jungkai Alfred Chen for their valuable feedback on this article and they thank Robert Hanson for his assistance choosing its title. The authors want to thank Professor Roberto Pignatelli and Professor Sai-Kee Yeung for useful conversations on the subject of the present paper. The authors would also like to express their gratitude to the anonymous reviewers for their thorough reading of the paper and suggestions. This research is funded by Vietnam Ministry of Education and Training (MOET) under grant number B2024-DQN-02.
\end{Acknowledgments}

\begin{paracol}{2}
\BinAddresses 
  \switchcolumn
\VicenteAddresses
\end{paracol}

\end{document}